\title[The tropical non-properness set of a polynomial map]{The tropical non-properness set of a polynomial map}
\author{Boulos El Hilany}
\thanks{For this work, the author was supported by the DFG Walter Benjamin Programme EL 1092/1-1
}
\thanks{MSC: Primary 14D06, Secondary: 14T20, 14M25, 58K15}
\thanks{Key words: Dominant polynomial maps, Jelonek set, Tropical Geometry, Newton Polytopes, Polyhedral Subdivisions}
\begin{document}

\maketitle
\begin{abstract} 

We study some discrete invariants of Newton non-degenerate polynomial maps $f : \mathbb{K}^n \to \mathbb{K}^n$ 
defined over an algebraically closed field of Puiseux series $\mathbb{K}$, equipped with a non-trivial valuation. 
It is known that the set $\mathcal{S}(f)$ of points at which $f$ is not finite forms an algebraic hypersurface in $\mathbb{K}^n$. 
The coordinate-wise valuation of $\mathcal{S}(f)\cap (\mathbb{K}^*)^n$ is a piecewise-linear object in $\mathbb{R}^n$, which we call the tropical 
non-properness set of $f$. We show that the tropical polynomial map corresponding to $f$ has fibers 
satisfying a particular combinatorial degeneracy condition exactly over points in the tropical non-properness set of $f$.
We then use this description to outline a polyhedral method for computing this set, and to recover the fan dual to the Newton polytope of the set at which a complex polynomial map is not finite. 
The proofs rely on classical correspondence and structural results from tropical geometry, combined with a new description of $\mathcal{S}(f)$ in terms of multivariate resultants.
\end{abstract}
 \markleft{}
 \markright{}
\section{Introduction}\label{sec:intro} A polynomial map $f:=(f_1,\ldots,f_n):\C^n\to \C^n$ is \emph{not proper} at $y\in\C^n$ if there is a sequence $\{x_k\}_k\subset \C^n$ satisfying $\Vert x_k\Vert\rightarrow \infty$ and $f(x_k)\rightarrow y$. Investigating the set $\cS(f)$ of points for which $f$ is not proper was initiated by Jelonek in~\cite{Jel93}, where he described its topological properties and provided computational results. He also used $\cS(f)$ to solve classical problems on the topology of $f$~\cite{Jel99,Jel99a,jelonek2005effective,Jel10}.  Currently, describing the geometry and topology of the \emph{Jelonek set}, $\cS(f)$, can be done exclusively using standard elimination methods~\cite[Proposition 7]{Jel93,Sta02}. However, these can often be inefficient whenever the polynomials involved have high degrees. 

Our main result is forward Theorem~\ref{th:main_corresp}. We establish in it a correspondence relating the ``non-properness'' of polynomial maps over a generalized field of Puiseux series to the so-called ``tropical non-properness'' of certain piecewise-affine maps. We call them \emph{tropical polynomial maps}. Tropical non-properness occurs whenever a preimage under a tropical polynomial map satisfies a certain non-degeneracy condition. We introduce in forward Theorem~\ref{th:main_computation} a recipe for recovering the tropical non-properness set by means of polyhedral geometry. Consequently, our results can be applied to reconstruct (see Remark~\ref{rem:dual-fan}) the dual fan of the Newton polytope corresponding to $\cS(f)$ above.

\subsection{Theoretical framework and main results}\label{sub:int:theoretical-frame} 
A polynomial map $f:X\to Y$, between two affine varieties over an algebraically closed field $K$ of characteristic zero, is said to be \emph{finite} if $K[X]$ is integral over $K[Y]$. In particular, the number of preimages (counted with multiplicities) in $X$ remains constant over any point in $Y$~\cite[Ch. I, \S 3]{shafarevich1994basic}. We say that $f$ is \emph{not finite at} $y\in Y$ if there is no Zariski open neighborhood $U$ of $y$ for which $f_{|f^{-1}(U)}:f^{-1}(U)\to U$ is finite. Note that a map $\C^n\to\C^n$ is not finite exactly at points in its Jelonek set (see e.g.,~\cite{jelonek2001topological,Sta02}). Jelonek's classical results~\cite{Jel93} were later extended to polynomial maps over any above $K$ as well as over the real numbers~\cite{Jel02,Sta02,Sta05,Tha09,JelLas18}. 
 Accordingly, the set of points in $X$ over which the map $f$ above is not finite will also be referred to as the \emph{Jelonek set}.

Let $\Bbbk$ be an algebraically closed field of characteristic zero. In this paper, we consider one-parametric series $c(t):=c_0t^{r_0}+c_1t^{r_1}+\cdots$ with coefficients in $\Bbbk$ and real exponents $r_0<r_1<\cdots$. These form an algebraically closed field $\K$ of characteristic zero~\cite{markwig2009field}. 
It is endowed with a function $\val:\K\to\R\cup\{-\infty\}$ sending zero to $-\infty$ and a non-zero element $c(t)$ above to the real value $-r_0$. The image $\Val(Z)$ under 
$\Val:=(\val(\cdot),\ldots,\val(\cdot))$ of an algebraic hypersurface $Z$ over $\K$, is a finite rational polyhedral complex of pure dimension $n-1$~\cite{groves1984geometry}. We call this image the \emph{tropicalization of $Z$}.  
It is known (see e.g.,~\cite{BIMS15,MS15}) that these objects, called \emph{tropical hypersurfaces}, preserve important topological criteria and other geometric invariants of their classical counterparts in the form of combinatorial and polyhedral data. 

The first correspondence theorem in tropical geometry was shown in~\cite{MKL06}. It stated that the tropicalization of an algebraic hypersurface $\{P = 0\}\subset (\K^*)^n$ is the locus in $\R^n$ at which the \emph{tropical polynomial} of $P$ is not differentiable.
 A tropical polynomial is a function $\R^n\to\R$ of the form $\max(L_1,\ldots,L_r)$, where each $L_1,\ldots,L_r$ is affine with positive integer coefficients.

 Tropical polynomials can be constructed from (classical) polynomials $\K^n\to\K$
 by replacing the coefficients with their valuations and the addition/multiplication operations with their respective tropical counterparts. The latter are defined for any values $a,b\in\R$, as $\max(a,b)$ and $a+b$ respectively. In this fashion, any tropical polynomial map $f^{\trop}:\R^n\to\R^m$ can be obtained from a polynomial map $f:\K^n\to\K^m$ by replacing the polynomials with their tropical counterparts. We call $f^{\trop}$ the \emph{tropical map of} $f$. For example, the below two tropical polynomials~\eqref{eq:main_example1_trop} constitute tropical map of~\eqref{eq:main_example1}: 
\begin{equation}\label{eq:main_example1_trop}
(F_1,~F_2):=(\max (x_2, ~ x_1+x_2,~ 2x_1+2x_2),~ \max (2x_2,~-2+x_1+x_2,~-4+2x_1+2x_2))
\end{equation}  
\begin{equation}\label{eq:main_example1}
(f_1,~f_2) := (z_2-z_1z_2 +2z_1^2z_2^2,~z_2^2 + t^{2}~z_1z_2 +t^{4}~z_1^2z_2^2)
\end{equation}

The \emph{virtual preimage} of a value $y\in\R$, under a tropical polynomial function $F: \R^n\to\R$, $x\mapsto\max(L_1(x),\ldots,L_r(x))$, is the set of all points in $\R^n$ at which the function $\max(L_1,\ldots,L_r,y)$ is not differentiable. We use the notation $\cT_{y}(F)$ to denote the virtual preimage of $y$ under $F$. In general, the virtual preimage is different from the (standard) preimage $F^{-1}(y)$, and can be viewed as the tropicalization of the preimage of a polynomial function. 
If, instead, we take $F$ to be a tropical polynomial map $(F_1,\ldots,F_n):\R^n\to\R^n$, then the \emph{virtual preimage} of a point $y\in\R^n$, also written as $\cT_y(F)$, is defined to be the intersection of all virtual preimages of $y_1,\ldots,y_n$, under the respective functions $F_1,\ldots,F_n$. Figure~\ref{fig:main-example} illustrates an example of the virtual preimage of $(-3,-5)$ under $F$ from~\eqref{eq:main_example1_trop}; it contains the union of two orange dots with the half-line (represented in the left-most side) whereas the standard preimage $F^{-1}(-3,-5)$ is the half-line.\\

\begin{definition}[Tropical non-properness]\label{def:trop-non-properness}
Let $H\subset\R^n$ be a half-line, and let $\alpha:=(\alpha_1,\ldots,\alpha_n)\in\R^n$ be a vector such that $H$ is a translation of $\alpha\cdot\R_{\geq 0}$. Then $H$ is said to be \emph{dictritical} if there exists $i,j\in\{1,\ldots,n\}$ such that $\alpha_i>0$ and $\alpha_j\leq 0$. The \emph{tropical non-properness set} $\TJF $ of a tropical polynomial map $F:\R^n\to\R^n$ is the set of all $y\in\R^n$ at which the virtual preimage contains a dicritical half-line.
\end{definition} 
Our paper concerns large families of polynomial maps $\KtK$, and we consider the subset of their Jelonek set contained in the torus $\bT:=(\K ^*)^n$. 
The \emph{support} of a polynomial is the set of exponent vectors of its monomials in $\N^n$ appearing  with non-zero coefficients. Given a tuple of polynomials $(f_1,\ldots,f_n)$, its \emph{support} is the tuple $A:=(A_1,\ldots,A_n)$ formed by the respective supports of the corresponding polynomials. 
All polynomials sharing a support $\sigma$ form a configuration space $\K^{\#\sigma}$ of points, whose coordinates represent the coefficients. In this way, we use $\K^A$ to denote the space of polynomial maps $\KtK$ supported on $A$, where
\[
\K^A \cong\K^{\# A_1}\times \cdots\times\K^{\# A_n}.
\] 

\begin{theorem}\label{th:main_corresp}
Let $A$ be any tuple of $n$ sets in $\N^n\setminus\{ (0,\ldots,0)\} $. Then, there is a Zariski open subset $\Omega\subset \K^A$  whose elements $f\in \Omega$ represent polynomial maps $\KtK$, such that 
\[
\Val(\cS(f)\cap\bT) = \TJF,
\] where $F:\R^n\to\R^n$ is the tropical polynomial map of $f$ (see e.g.,~\eqref{eq:main_example1} and~\eqref{eq:main_example1_trop}).
\end{theorem}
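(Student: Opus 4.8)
The plan is to establish the two inclusions $\Val(\cS(f)) \subseteq \TJF$ and $\TJF \subseteq \Val(\cS(f))$ separately, using Jelonek's curve-theoretic description of $\cS(f)$ on the classical side and the combinatorics of virtual preimages on the tropical side. Recall that $y \in \cS(f)$ if and only if there is a sequence $x_k$ with $\|x_k\| \to \infty$ and $f(x_k) \to y$; working over $\K$, this is captured by an analytic/formal arc: $y \in \cS(f)$ precisely when there is a $\K$-point of $\cS(f)$, i.e. (after base change) a curve $\gamma(t) \in \K^n$ with at least one coordinate of negative valuation (``going to infinity'') such that $f(\gamma(t))$ has a well-defined limit, namely a point of $\bT$ whose valuation is the prescribed $y$. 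The first step is therefore to translate ``$f$ not finite at a $\K$-point $\tilde y$ with $\Val(\tilde y) = y$'' into the existence of such an arc $\gamma$ with $\Val(\gamma)$ a dicritical half-line (the condition ``some coordinate $\to \infty$'' tropicalizes exactly to ``the ray $\Val(\gamma)$ is not translatable into $(\R_{\leq 0})^n$''), and to check that $\Val(f(\gamma(t)))$ lies on the virtual preimage of $y_i$ under $F_i$ for each $i$.

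For the inclusion $\Val(\cS(f)) \subseteq \TJF$, I would take a point $\tilde y \in \cS(f)$, produce the arc $\gamma$ as above, and tropicalize: $\Val(\gamma)$ is a half-line $H$ in $\R^n$ emanating from some point, it is dicritical by the infinitude condition, and for each $i$ the identity $f_i(\gamma(t)) = \tilde y_i$ (or $\to \tilde y_i$) together with the standard fact that $\val \circ f_i$ along an arc is governed by $F_i$ — more precisely, that on the arc the maximum defining $F_i$ is either achieved uniquely (giving $F_i = y_i$, placing the point on $F_i^{-1}(y_i)$) or achieved by two competing terms at the value $y_i$ — forces $H$ (or at least a sub-ray of it) into the virtual preimage of $y_i$. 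Hence $H$ witnesses $y \in \TJF$. The genericity hypothesis $f \in \Omega$ enters here to rule out coincidental cancellations: one needs that the ``tropical limit'' of $f$ along $\gamma$ is actually attained, i.e. no unexpected vanishing of initial forms, which is a nonvanishing condition on finitely many subresultant-type or initial-ideal coefficients — a Zariski open condition on $\K[A]$.

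The reverse inclusion $\TJF \subseteq \Val(\cS(f))$ is where I expect the main obstacle. Given a dicritical half-line $H$ in the virtual preimage of each $y_i$, I must lift it to an honest arc $\gamma(t) \in \K^n$, with $\Val(\gamma) = H$, along which $f_i(\gamma(t))$ converges in $\bT$ to a point of valuation $y_i$ — equivalently, reconstruct a $\K$-point of $\cS(f)$ over $y$. The combinatorial data of $H$ lying on the virtual preimage tells us, for each $i$, which initial forms $\mathrm{in}_w f_i$ (for $w$ along $H$) are binomials vanishing at the sought leading coefficients; solving these initial equations simultaneously over $\C$, and then lifting term by term (a Hensel / Newton–Puiseux style argument, using that $\K$ is algebraically closed and the arcs converge), produces $\gamma$. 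The delicate points are: (a) ensuring the simultaneous system of initial binomial equations is consistent and has a solution with all coordinates nonzero — this is precisely what genericity buys, via a dimension count on $\K[A]$ forcing the relevant initial ideals to be ``as transverse as possible''; and (b) controlling that the constructed arc genuinely has one coordinate tending to infinity (dicriticality of $H$) rather than the infinity being an artifact that cancels in $f$, which again reduces to a transversality/genericity statement. I would organize (a)–(b) around a stable-intersection or mixed-subdivision argument for the tropical hypersurfaces $\Val(\{f_i = \tilde y_i\})$, using Theorem~\ref{th:main_corresp}'s companion Theorem~\ref{th:main_computation} to identify $\TJF$ cell by cell and match each cell with a lift.
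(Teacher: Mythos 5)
Your proposal follows a genuinely different route from the paper: you work with arcs going to infinity and tropicalize them, whereas the paper never constructs arcs at all. Its proof is the concatenation of two structural results: Proposition~\ref{prop:algebraic_correspondence}, which uses Bernstein's theorem (a point $w$ lies in $\cS(f)$ iff the number of isolated preimages drops below the mixed volume, iff some face system $(f-w)_\Gamma=\unze$ has a torus solution) to get $\cS(f)=\bigcup_{\Gamma}\cR_\Gamma(f)$ over dicritical faces $\Gamma$; and Proposition~\ref{prop:tropical-and-tilted}, which uses Kapranov's theorem, the Osserman--Payne lifting theorem for transversal intersections, and the duality between cells of $\Xi(y)$ and the mixed subdivision of $\Sigma\Delta^0$ to get $\TJF=\bigcup_\Gamma\Val(\cR_\Gamma(f))$. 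The set $\Omega$ is the (Zariski open) locus of face-generic maps. Your approach, if completed, would be closer in spirit to the complex-analytic definition of non-properness, but as written it has gaps that the paper's route is specifically designed to avoid.

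The most serious gap is your starting point: over $\K$ the set $\cS(f)$ is \emph{defined} as the non-finiteness locus, and the equivalence with ``there exists an arc $\gamma$ with a coordinate going to infinity and $f(\gamma)\to \tilde y$'' is exactly the statement that is only cited for $K=\C$; over $\K$ an arc would have to live over a further valued extension of $\K$, and you neither justify the equivalence nor explain how to descend. The paper sidesteps this by using the fiber-counting characterization of finiteness plus Theorem~\ref{th:Bernstein}, which is also where the dicritical faces (and hence dicriticality of the half-line) actually come from. Second, even granting an arc, its tropicalization is a priori a path in $\R^n$, not a straight half-line contained in a single cell of $\cT_y(F)$; making it one requires the polyhedral structure of the virtual preimage and its duality with faces of $\Delta^0$ (Lemma~\ref{lem:combined-remarks}), which you do not invoke. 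Third, the source is $\K^n$ rather than the torus, so non-finiteness can be caused near coordinate hyperplanes; the paper needs condition~\ref{it:coordinate_not_non-prop} of Definition~\ref{def:face-generic} and Lemma~\ref{lem:resultant_disjoint_from_coordinate} to discard $f(\cH)$ up to codimension $2$, and your proposal ignores this entirely. Finally, in the reverse inclusion your ``Hensel/Newton--Puiseux'' lifting is precisely the content of Theorem~\ref{thm:Kap-general} applied to the transversal face systems $(f-w)_\Gamma$ — but you would still owe an argument that the lifted solution witnesses non-finiteness at $\tilde y$ (the paper gets this from quasi-homogeneity of the face systems, forcing positive-dimensional fibers and a drop in Bernstein's count), not merely a point of an initial-form variety.
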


\begin{remark}\label{rem:origin}
The restrictions imposed in Theorem~\ref{th:main_corresp} are mild; on the one hand, since $\cS(f)$ is a hypersurface~\cite{Jel93}, computing its missing part at the coordinate hyperplanes is a trivial task. On the other hand, removing the origin from the supports equates to adding constants to the initial polynomials, which only translates 
the problem.

\end{remark}
We use the term \emph{face-generic} in reference to the maps $\KtK$ in $\Omega$ appearing in Theorem~\ref{th:main_corresp}. The latter theorem states that, in order to compute the valuation of the Jelonek set of a face-generic map it is enough to use the data of its tropical polynomial map. As a tropical hypersurface, this valuation is a rational piecewise-polyhedral complex, and we will present a combinatorial recipe for constructing it.

\begin{figure}[h]
\center
\input{fig-main-example}

\caption{
{\small Local illustrations of the virtual preimage under $F$ (taken from Example~\ref{ex:main:dim=2}) of some points in the set $\Val(\cS(f))$. The blue and black graphs correspond to the virtual preimages of the different points under $F_1$ and $F_2$.} 
}\label{fig:main-example}
\end{figure}

\begin{example}\label{ex:main:dim=2}
Consider the polynomial map~\eqref{eq:main_example1}, whose corresponding tropical polynomial map is~\eqref{eq:main_example1_trop}. In the coordinates $(w_1,w_2)$, its Jelonek set is given by the polynomial 
\begin{equation}\label{eq:example-Jelonek-set}
t^8~w_1^2 -(2t^4+ t^6)~w_1 - 4t^4~w_1w_2 - (2t^2 +t^4)~w_2 + 4~w_2^2
\end{equation}
 It was obtained using Gr\"obner basis methods~\cite{Jel93}.
Its tropical curve $\Val(\cS(f))$ is shown in purple in Figure~\ref{fig:main-example}; we obtained it using the tropical polynomial of~\eqref{eq:example-Jelonek-set}, expressed as
\[
\max( -8+2x_1,~ -4+x_1,~ -4+x_1+x_2, - 2+x_2,~2x_2).
\] For each $y\in\{ (-3,-5),~(-3,-2),~y_0\}\subset \Val(\cS(f))$, the virtual preimage (in orange) $\cT_y(F)$ contains a dicritical half-line spanned by the vector $(1,-1)$. \\
\end{example}
\begin{remark}\label{rem:dual-fan}
Results in~\cite{Jel93} and in~\cite{Sta02} provide a Gr\"obner-basis method to compute the polynomial $\mathsf{S}_f$ determining $\cS(f)$ for maps defined over any algebraically closed field $K$. The coefficients of $\mathsf{S}_f$ are polynomials over $\Z$ in the coefficients of $f$. This shows that, for generic complex maps $g:\C^n\to\C^n$, supported on $A$, the Newton polytope $\Delta\subset\R^n$ of $\cS(g)$ coincides with the Newton polytope of $\cS(f)$ for a face-generic map in $f\in\K^A$. On the other hand, a tropical hypersurface determines the dual fan of its Newton polytope. Therefore, Theorem~\ref{th:main_corresp} shows that one can recover the dual fan of $\Delta$ solely from the data of $\TJF$ corresponding to the tropical map of $f$. 
\end{remark}

\begin{example}\label{ex:main2:dim=2}[Example~\ref{ex:main:dim=2} ctnd.]
Let $B$ denote the pair of supports for the map in~\eqref{eq:main_example1}. Replacing the list $(1,-1,2,1,t^2,t^4)\in\K^B$ of its coefficients by a generic list of values $ \underline{b}:=(b_1,b_2,b_3,b'_1,b'_2,b'_3)\in\C^B$, we get a complex polynomial map $g\in\C^B$ satisfying
\begin{equation}\label{eq:Sg}
\mathsf{S}_g=
{b'_3}^2~w_1^2 +(b_2b'_2b'_3 - b_3{b'_2}^2 )~w_1 - 2b_3b'_3~w_1w_2 + (b_2b_3{b'}_2 - b_2^2{b'}_3)~w_2 + b_3^2 ~w_2^2.
\end{equation} Then, for a generic choice of $\underline{b}$, the Newton polytope of $\mathsf{S}_g$ is the convex hull of the set $\left(
\begin{smallmatrix}
  1 &  0 & 2 & 0 \\
  0 &  1 & 0 & 2 \\
\end{smallmatrix}
\right)$, which coincides with $\Delta$ from Remark~\ref{rem:dual-fan}. 
Furthermore, one can estimate $\Delta$ using its outer dual fan $\mathcal{F}\subset(\R^2)^*$, together with Jelonek's results in~\cite{Jel93} on the degree of the non-properness set. Indeed, on the one hand, the set $\TJF$ determines that the rays of $\mathcal{F}$ are generated by $(-1,-1)$, $(1,1)$, $(-1,0)$, and $(0,-1)$. On the other hand, using~\cite[Theorem 15]{Jel93}, we compute $\deg \mathsf{S}_g\leq (4\times 4 - 4)/4 = 3$. We can thus conclude that $\Delta$ is the convex hull of one of the three subsets $\left(
\begin{smallmatrix}
  1 &  0 & 2 & 0 \\
  0 &  1 & 0 & 2 \\
\end{smallmatrix}
\right)$, $\left(
\begin{smallmatrix}
  1 &  0 & 3 & 0 \\
  0 &  1 & 0 & 3 \\
\end{smallmatrix}
\right)$ or $\left(
\begin{smallmatrix}
  2 &  0 & 3 & 0 \\
  0 &  2 & 0 & 3 \\
\end{smallmatrix}
\right).$ 
\end{example}

A tropical polynomial map $F:\R^n\to\R^n$ induces a polyhedral cell-decomposition 
\[
\R^n=\bigsqcup_{\xi\in\Xi}\xi,
\] where every element in $\Xi$ is the relative interior of a convex polyhedron in $\R^n$, and the restriction $F_{|\xi}$ at each $\xi\subset \R^n$ is an affine map such that if $F_{|\xi}= F_{|\xi'}$, then either $\xi =\xi'$ or its closure $\overline{\xi}$ is a face of $\overline{\xi'}$ (c.f~\cite{BB13,grigoriev2022tropical}). For example, the map~\eqref{eq:main_example1_trop} gives rise to the decompsition of $\R^2$ represented in Figure~\ref{fig:example-th-comput} on the left. To each cell $\xi\in\Xi$ one associates the set $\mathcal{V}_\xi(F)$ consisting of points $y\in\R^n$ for which the virtual preimage $\cT_y(F)$ intersects the cell $\xi$. We use $\overline{\mathcal{V}_\xi(F)}$ to denote the closure (in the Euclidean topology) of $\mathcal{V}_\xi(F)$.\\

\begin{theorem}\label{th:main_computation}
Let $F:\R^n\to\R^n$ be the tropical map of a face-generic polynomial map $\KtK$ and let $\Xi$ denote the polyhedral cell-decomposition of $\R^n$ induced by $F$. Then, there exists a subset $\Upsilon\subset\Xi$, satisfying
\[
\TJF = \bigcup_{\xi\in\Upsilon} \mathcal{V}_\xi(F),
\] where each $\overline{\mathcal{V}_\xi(F)}$ is a polyhedron.
\end{theorem}  Elements $\xi$ from the subset $\Upsilon$ appearing in Theorem~\ref{th:main_computation} will be called \emph{contributing cells}. Along with the proof of Theorem~\ref{th:main_computation}, we provide in~\S\ref{sec:computing} a method for distinguishing contributing cells from the remaining ones in $\Xi$. In turn, this gives rise to an effective method for computing a polyhedral decomposition of $\TJF$ (see e.g., Figure~\ref{fig:example-th-comput}).

\begin{figure}[h]
\centering
\tikzset{every picture/.style={line width=1pt}} 

\begin{tikzpicture}[x=0.75pt,y=0.75pt,yscale=-1.25,xscale=1.25]

\draw [color={rgb, 255:red, 0; green, 0; blue, 0 }  ,draw opacity=1 ]   (114.08,179.97) -- (142.53,208.42) ;
\draw [color={rgb, 255:red, 74; green, 144; blue, 226 }  ,draw opacity=1 ]   (130.93,235.25) -- (76.08,179.97) ;
\draw [color={rgb, 255:red, 0; green, 0; blue, 0 }  ,draw opacity=1 ]   (114.08,179.97) -- (114.08,147.72) ;
\draw [color={rgb, 255:red, 74; green, 144; blue, 226 }  ,draw opacity=1 ]   (59.83,147.47) -- (76.08,179.97) ;
\draw [color={rgb, 255:red, 74; green, 144; blue, 226 }  ,draw opacity=1 ]   (76.08,179.83) -- (76.08,237.19) ;
\draw [color={rgb, 255:red, 0; green, 0; blue, 0 }  ,draw opacity=1 ]   (56.68,237.37) -- (113.72,180.33) ;
\draw  [color={rgb, 255:red, 245; green, 166; blue, 35 }  ,draw opacity=1 ][fill={rgb, 255:red, 245; green, 166; blue, 35 }  ,fill opacity=1 ] (74.4,218) .. controls (74.4,217.07) and (75.15,216.32) .. (76.08,216.32) .. controls (77.01,216.32) and (77.77,217.07) .. (77.77,218) .. controls (77.77,218.93) and (77.01,219.68) .. (76.08,219.68) .. controls (75.15,219.68) and (74.4,218.93) .. (74.4,218) -- cycle ;
\draw  [color={rgb, 255:red, 74; green, 144; blue, 226 }  ,draw opacity=1 ][fill={rgb, 255:red, 74; green, 144; blue, 226 }  ,fill opacity=1 ] (74.84,179.97) .. controls (74.84,179.28) and (75.4,178.73) .. (76.08,178.73) .. controls (76.77,178.73) and (77.32,179.28) .. (77.32,179.97) .. controls (77.32,180.65) and (76.77,181.21) .. (76.08,181.21) .. controls (75.4,181.21) and (74.84,180.65) .. (74.84,179.97) -- cycle ;
\draw [color={rgb, 255:red, 144; green, 19; blue, 254 }  ,draw opacity=1 ][fill={rgb, 255:red, 144; green, 19; blue, 254 }  ,fill opacity=1 ]   (212.47,192.98) -- (256.2,192.98) ;
\draw [color={rgb, 255:red, 144; green, 19; blue, 254 }  ,draw opacity=1 ][fill={rgb, 255:red, 144; green, 19; blue, 254 }  ,fill opacity=1 ]   (212.94,236.24) -- (256.2,192.98) ;
\draw [color={rgb, 255:red, 144; green, 19; blue, 254 }  ,draw opacity=1 ][fill={rgb, 255:red, 144; green, 19; blue, 254 }  ,fill opacity=1 ]   (256.2,192.98) -- (299.67,171.24) ;
\draw [color={rgb, 255:red, 144; green, 19; blue, 254 }  ,draw opacity=1 ][fill={rgb, 255:red, 144; green, 19; blue, 254 }  ,fill opacity=1 ]   (299.67,171.24) -- (299.67,235.71) ;
\draw [color={rgb, 255:red, 144; green, 19; blue, 254 }  ,draw opacity=1 ][fill={rgb, 255:red, 144; green, 19; blue, 254 }  ,fill opacity=1 ]   (321.4,149.51) -- (299.67,171.24) ;
\draw  [color={rgb, 255:red, 144; green, 19; blue, 254 }  ,draw opacity=1 ][fill={rgb, 255:red, 144; green, 19; blue, 254 }  ,fill opacity=1 ] (255.64,192.98) .. controls (255.64,192.67) and (255.89,192.41) .. (256.2,192.41) .. controls (256.52,192.41) and (256.77,192.67) .. (256.77,192.98) .. controls (256.77,193.29) and (256.52,193.54) .. (256.2,193.54) .. controls (255.89,193.54) and (255.64,193.29) .. (255.64,192.98) -- cycle ;
\draw  [color={rgb, 255:red, 144; green, 19; blue, 254 }  ,draw opacity=1 ][fill={rgb, 255:red, 144; green, 19; blue, 254 }  ,fill opacity=1 ] (299.11,171.24) .. controls (299.11,170.93) and (299.36,170.68) .. (299.67,170.68) .. controls (299.98,170.68) and (300.23,170.93) .. (300.23,171.24) .. controls (300.23,171.56) and (299.98,171.81) .. (299.67,171.81) .. controls (299.36,171.81) and (299.11,171.56) .. (299.11,171.24) -- cycle ;
\draw [color={rgb, 255:red, 144; green, 19; blue, 254 }  ,draw opacity=1 ][line width=1.5]    (133.07,237.19) -- (96.08,199.97) ;
\draw  [color={rgb, 255:red, 245; green, 166; blue, 35 }  ,draw opacity=1 ][fill={rgb, 255:red, 245; green, 166; blue, 35 }  ,fill opacity=1 ] (92.78,198.87) .. controls (92.78,197.94) and (93.53,197.18) .. (94.46,197.18) .. controls (95.39,197.18) and (96.14,197.94) .. (96.14,198.87) .. controls (96.14,199.8) and (95.39,200.55) .. (94.46,200.55) .. controls (93.53,200.55) and (92.78,199.8) .. (92.78,198.87) -- cycle ;
\draw [color={rgb, 255:red, 144; green, 19; blue, 254 }  ,draw opacity=1 ][line width=1.5]    (155.79,221.68) -- (114.08,179.97) ;
\draw  [color={rgb, 255:red, 0; green, 0; blue, 0 }  ,draw opacity=1 ][fill={rgb, 255:red, 0; green, 0; blue, 0 }  ,fill opacity=1 ] (112.84,179.97) .. controls (112.84,179.28) and (113.4,178.73) .. (114.08,178.73) .. controls (114.77,178.73) and (115.32,179.28) .. (115.32,179.97) .. controls (115.32,180.65) and (114.77,181.21) .. (114.08,181.21) .. controls (113.4,181.21) and (112.84,180.65) .. (112.84,179.97) -- cycle ;

\draw (106.59,254.67) node [anchor=north] [inner sep=0.75pt]  [font=\small]  {$\Xi $};
\draw (264.94,178.71) node [anchor=south] [inner sep=0.75pt]  [font=\tiny]  {$\mathcal{V}_b(F) $};
\draw (270.19,254.9) node [anchor=north] [inner sep=0.75pt]  [font=\small]  {$\Val(\cS(f))$};
\draw (72.84,179.97) node [anchor=east] [inner sep=0.75pt]  [font=\tiny]  {$( 0,0)$};
\draw (111.72,176.93) node [anchor=south east] [inner sep=0.75pt]  [font=\tiny]  {$( 2,0)$};
\draw (293.54,156.98) node [anchor=south] [inner sep=0.75pt]  [font=\tiny]  {$\mathcal{V}_c(F) $};
\draw (240.57,240.21) node [anchor=south] [inner sep=0.75pt]  [font=\tiny]  {$\mathcal{V}_a(F)$};
\draw (234.34,193.58) node [anchor=south] [inner sep=0.75pt]  [font=\tiny]  {$\mathcal{V}_\alpha(F)$};
\draw (301.67,203.48) node [anchor=west] [inner sep=0.75pt]  [font=\tiny]  {$\mathcal{V}_\beta(F)  $};
\draw (99.77,224.81) node [anchor=south] [inner sep=0.75pt]  [font=\small]  {$a$};
\draw (126.34,211.51) node [anchor=south] [inner sep=0.75pt]  [font=\small]  {$b$};
\draw (137.67,174.58) node [anchor=south] [inner sep=0.75pt]  [font=\small]  {$c$};
\draw (157.79,225.08) node [anchor=north west][inner sep=0.75pt]  [font=\small]  {$\beta $};
\draw (132.93,238.65) node [anchor=north west][inner sep=0.75pt]  [font=\small]  {$\alpha$};

\end{tikzpicture}

\caption{
{\small\textit{Left}: The decomposition induced by the map from Example~\ref{ex:main:dim=2}. The letters $a$, $b$ and $c$ are labels for the two-dimensional polyhedra containing them. The letters $\alpha$ and $\beta$ are the labels for the adjacent one-dimensional polyhedra (in purple). \textit{Right}: The tropical non-properness set is made up of edges $\mathcal{V}_\xi(F)$, where $\xi\in\{a,b,c,\alpha,\beta\}$; for each point $y$ in $\mathcal{V}_\xi(F)$, its virtual preimage has a dicritical half-line in the cell of $\Xi$ with the label $\xi$.}
}
\label{fig:example-th-comput}

\end{figure}

\subsection{Organization of the paper}\label{sub:int:main-res} 
\S\ref{sec:faces} is devoted to introducing some of the necessary conditions for a polynomial map to be face-generic, called \emph{almost face-generic} (Definition~\ref{def:face-generic}). These conditions apply for any polynomial map over an algebraically-closed field $K$ of characteristic zero. We show in Proposition~\ref{prp:face-generic-open} that almost face-genericity is an open condition in $K^A$. This will be useful for describing the Jelonek set as a union of \emph{face-resultants} in $K^n$ (Definition~\ref{def:r-star-gamma} and Theorem~\ref{thm:algebraic_correspondence}). Face-resultants are obtained from square polynomial systems by restricting individual polynomials of the map onto faces of their respective Newton polytopes. A subset $S\subset K^n$ is said to be \emph{$K$-uniruled} if for every $y\in S$ there exists a polynomial map $\varphi:K\to S$ with $\varphi(0)=y$. Thanks to our description in~\S\ref{sec:prel} of the Jelonek set, we re-prove Jelonek's result on $K$-uniruledness for almost face-generic maps over $K$ (Corollary~\ref{cor:k-uniruled}). 

In \S\ref{sec:prel} we introduce notations and some classical results from basic tropical algebraic geometry. One ingredient needed from~\S\ref{sec:prel} is the classical tropical correspondence theorem for transversal intersections (Theorem~\ref{thm:Kap-general}), and another one is the duality between the tropical hypersurfaces and subdivisions of polytopes (Theorem~\ref{thm:mixed_subd}). For Theorem~\ref{th:main_corresp}, we show in~\S\ref{sec:correspondence} that the tropical non-properness set is the union of valuations of the face-resultant sets (Theorem~\ref{thm:tropical-and-tilted}). This, together with results in Section~\ref{sec:prel}, yields Theorem~\ref{th:main_corresp}. The proof of Theorem~\ref{th:main_computation} is in~\S\ref{sec:computing}.

\section{Jelonek set from faces of polytopes}\label{sec:faces} 

We start this section with some notations and definitions. In \S\ref{sub:prelim} we define almost face-generic polynomial maps, and show that they form a Zariski open subset in the space of all maps with the same support. In~\S\ref{sub:pol-restricted}, we describe some relevant properties of face-resultants for almost face-generic maps. We recall in~\S\ref{sub:Bernstein} Bernstein's theorem for polynomial tuples and apply it to determine the topological degree of a polynomial map. The correspondence between face-resultants and the Jelonek set culminates in~\S\ref{sub:non-prop_resultants}. We finish this section by using this description to re-prove $K$-uniruledness for the Jelonek set.

\subsection{Preliminaries on Newton polytopes}\label{sub:prelim} A subset of $\R^n$ is called a \emph{polyhedron} if it is the intersection of finitely-many closed half-spaces. A \emph{face} $\Phi$ of a polyhedron $\Pi\subset\R^n$ is given by the intersection of $\Pi$ with a hyperplane $H$ for which $\Pi$ is contained in the closure of one of the two halfspaces determined by $H$. We call $H$ a \emph{supporting} hyperplane of $\Phi$. The face $\Phi$ is called \emph{proper} if $\Phi\neq\Pi$. We say that $\Phi$ is \emph{dicritical} if the outward normal vector of one of its supporting hyperplanes positively spans a dicritical half-line. That is, there exists a functional $(x_1,\ldots,x_n)\mapsto \alpha_1x_1 +\cdots + \alpha_nx_n$ that maximizes $\Pi$ at $\Phi$, and satisfies $\alpha_i>0$ and $\alpha_j\leq 0$ for some $i,j\in\{1,\ldots,n\}$.

If a polyhedron is bounded, then we call it a \emph{polytope}. Given a tuple of polytopes $\Pi:=(\Pi_1,\ldots,\Pi_n)$, the Minkowski sum of their members is a polytope in $\R^n$, denoted by $\sum\Pi$. Here, the \emph{Minkowski sum} of any two subsets $A,B\subset\R^n$ is the coordinate-wise sum $
A+B:=\{a+b~|~a\in A,~b\in B\}$. For every integer $n\in\N$, we use $[n]$  as a shorthand for $\{1,\ldots,n\}$. The sub-tuple $(\Pi_i)_{i\in I}$ is denoted by $I\Pi$ for any $I\subset [n]$. A tuple $\Gamma:=(\Gamma_1,\ldots,\Gamma_n)$ of polytopes in $\R^n$ is said to be a \emph{tuple-face} (or, simply, \emph{face} whenever it is clear from the context) of $\Pi$ if the subset $\Gamma_i$ is a face of $\Pi_i$ ($i=1,\ldots,n$) and $\sum\Gamma$ is a face of $\sum\Pi$. We use the notation $\Gamma\prec\Pi$. We say that $\Gamma\prec\Pi$ is a \emph{proper} face if $\sum\Gamma$ a proper face of $\sum\Pi$.\\ 

\begin{notation}\label{not:origin}
We use $\unze$ to denote the point $(0,\ldots,0)$, and $\Rnn$ to denote the subset in  $\R^n$ formed by all points with non-negative coordinates. 
\end{notation} 

\begin{definition}\label{def:dicritical-tupe}
For any polytope $\Pi\subset\Rnn$, either $\unze$ is a vertex of $\Pi$, or $\unze\not\in \Pi$. A face $\Gamma$ of a tuple of polytopes $\Delta$ is called \emph{origin} if $\unze\in\sum\Gamma$. This is equivalent to $\Gamma_i$ containing $\unze$ for all $i\in [n]$. We say that $\Gamma$ is \emph{semi-origin} if $\unze\in\Gamma_i$ for some $i\in [n]$, and \emph{dicritical} if $\sum\Gamma$ is a dicritical face of $\sum\Delta$.
\end{definition}

\begin{figure}
\centering
\input{fig-three-polytopes}
\caption{A triple of polytopes $\Delta=(\Delta_1,\Delta_2,\Delta_3)$ in $\R^3$ obtained from Example~\ref{ex:rgamma}.}\label{fig:three_polytopes}
\end{figure}

\begin{example}\label{exp:possibly-origin}
Figure~\ref{fig:three_polytopes} represents a triple $\Delta$ of polytopes in $\R^3$. The face traced in red is dicritical and not origin. So is the face traced in green. The face obtained by intersecting $\sum\Delta$ with the horizontal coordinate plane is origin and not dicritical.
\end{example} 

Let $K$ be an algebraically closed field of characteristic zero, and let $f$ be a polynomial in $K[z_1,\ldots,z_n]$. Then $f$ is expressed as a linear combination 
\[
\sum_{a\in A} c_a z^a
\]
of monomials $z^a:=z_1^{a_1}\cdots z_n^{a_n}$, where $A$ is a finite subset of $\N^n$, and $c_a\in{\!K}^*$.  We call $A$ the \emph{support} of $f$, and its \emph{Newton polytope} is the convex hull of $A$ in $\R^n$. If $\sigma$ is a subset of $A$, the \emph{restriction of $f$ to $\sigma$}, denoted by $f_{\sigma}$, is the polynomial $\sum_{a\in\sigma}c_az^a$. The notation $\VKa(f)$ will refer to the zero locus $\{z\in \bT~|~f(z)=0\}$. Now, if $f$ is a tuple of polynomials $f_1,\ldots,f_k\in K[z_1,\ldots,z_n]$, we use the same notation $\VKa(f)$ for the intersection $\VKa(f_1)\cap\cdots\cap \VKa(f_k)\subset \bT$.

\subsection{Face-resultants}\label{sub:pol-restricted}

Let $A:=(A_1,\ldots,A_n)$ be a tuple of subsets in $\N^n\setminus\{\unze\}$, and let $\Delta:=(\Delta_1,\ldots,\Delta_n)$ denote the tuple of polytopes in $\Rnn$ given by
\begin{equation}\label{eq:delta_0}
\Delta_i:=\conv\big(\{\unze\}\cup\! A_i\big)~\text{for } i =1,\ldots,n.
\end{equation} Recall that $K^A$ refers to the space of all tuples of polynomials supported on $A$, and let $f:=(f_1,\ldots,f_n)$ be a tuple in $K^A$ defining a polynomial map $f:K^n\to K^n$. We assume in what follows that $\sum A$ affinely spans $\R^n$.\\

\begin{notation}\label{not:face-restrictions}
Let $\Gamma\prec\Delta$. For $i=1,\ldots,n$, we use $f_{i\Gamma}$ to denote the restricted polynomial $(f_i)_{\Gamma_i\cap A_i}$, and for any $w:=(w_1,\ldots,w_n)\in K^n$, we define 
\begin{equation}\label{eq:cases:omega}
(f_i - w_i)_\Gamma:= \begin{cases}
f_{i\Gamma} - w_i & \text{if }  \{\unze\}\subset\Gamma_i \\[4pt]
f_{i\Gamma}  & \text{otherwise}. \\
\end{cases}
\end{equation} The tuples $(f_{1\Gamma},\ldots,f_{n\Gamma})$ and $((f_1 - w_1)_\Gamma,\ldots,(f_n - w_n)_\Gamma)$ are denoted by $f_\Gamma$ and $(f-w)_\Gamma$ respectively. For any subset $I\subset [n]$, we use the notations $
f_{I\Gamma}:=(f_{i\Gamma})_{i\in I}$ and $(f - w)_{I\Gamma}:=((f_i-w_i)_{i\Gamma})_{i\in I}$.
\end{notation} 

\begin{definition}\label{def:multivariate_resultant}
For any face $\Gamma\prec\Delta$, and any $I\subset [n]$, we define two different \emph{multivariate resultants} (c.f.~\cite{GKZ94,St94,Est13}) $\Res_{I\Gamma}^A$, and $\Res_{I\Gamma}^\Delta$, given by 
\begin{align*}
\Res_{I\Gamma}^A &:= \left\lbrace f\in K^A~\left|~f_{I\Gamma} = 0 \text{ has a solution in $\bT$}  \right\rbrace\right. ,\\
\Res_{I\Gamma}^\Delta &:= \left\lbrace (f,w)\in K^A\times K^n~\left|~(f-w)_{I\Gamma}=0 \text{ has a solution in $\bT$}  \right\rbrace\right. ,
\end{align*} 
\end{definition}

In what follows, we use classical results on the multivariate resultants to prove technical statements concerning the main theorem of this section.\\

\begin{definition}\label{def:face-generic}
The tuple $f$ is called \emph{almost face-generic} if for any $\Gamma\prec\Delta$ the following conditions hold:
\begin{enumerate}[topsep=0mm,itemsep=0mm,label=(\alph*),ref=(\alph*)]

	\item\label{it:vg_not_vgprime} for any $\Gamma'\prec\Delta$ different from $\Gamma$, the sets $\VKa(f_{\Gamma})$ and $\VKa(f_{\Gamma'})$ are equal only if both are empty,\\
	
	\item\label{it:vgi_smooth} for any $I\subset [n]$, the set $\VKa(f_{I\Gamma})$ is a smooth irreducible complete intersection satisfying 
		\begin{equation}\label{eq:def:Newton}
			\# I > \dim \sum I\Gamma \Longrightarrow \VKa(f_{I\Gamma})=\emptyset.
		\end{equation}
\end{enumerate}

\end{definition} 

\begin{proposition}\label{prp:face-generic-open}
The set of almost face-generic tuples in $K^A$ contains a Zariski open subset.
\end{proposition}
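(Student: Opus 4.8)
The strategy is to present $\Omega$ as a finite intersection of Zariski open subsets of $K[A]$ and to establish openness one condition at a time. Since the poset $\{\Gamma:\Gamma\prec\Delta^0\}$ is finite and $I$ runs over the finitely many subsets of $[n]$, it suffices to show that, for each fixed choice of the relevant data, the set of $f\in K[A]$ satisfying \ref{it:vg_not_vgprime} (for a fixed pair $\Gamma\ne\Gamma'$), \ref{it:vgi_smooth} (for a fixed $\Gamma$ and $I$), resp.\ \ref{it:coordinate_not_non-prop} (for a fixed coordinate hyperplane $H$) is Zariski open; the almost face-generic locus is then the intersection of all of these.

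I would begin with \ref{it:vgi_smooth}, which is the classical assertion that Bernstein--Khovanskii nondegeneracy is an open condition. Each $f_{i\Gamma}$ depends only on the coordinates $c_a$ of $K[A]$ with $a\in\Gamma_i\cap A_i$ and has fixed support, so the locus where $\VKa(f_{I\Gamma})$ fails to be a transverse (hence smooth) complete intersection of codimension $|I|$ is, up to a product with the remaining coordinates, the complement of the corresponding discriminant locus, hence Zariski closed. The dichotomy \eqref{eq:def:Newton} is handled by Bernstein's theorem together with its refinements for essential subsystems (\S\ref{sub:Bernstein}, Proposition~\ref{prop:mixed_subd}): if $|I|\le\dim\sum_{i\in I}\Gamma_i$ the generic member of this linear family is a nonempty smooth complete intersection of codimension $|I|$, and the locus where it degenerates (becomes empty, has excess dimension, or acquires a singular point) is closed; if $|I|>\dim\sum_{i\in I}\Gamma_i$ the supports are overdetermined, a generic member has empty zero set, and the locus with a common zero is contained in a resultant hypersurface, hence closed. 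In all cases the good locus is open.

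For \ref{it:vg_not_vgprime} I would invoke \ref{it:vgi_smooth} with $I=[n]$ for both $\Gamma$ and $\Gamma'$: on the open set where this already holds, each of $\VKa(f_\Gamma)$, $\VKa(f_{\Gamma'})$ is empty --- precisely when $\dim\Sigma\Gamma<n$, resp.\ $\dim\Sigma\Gamma'<n$ --- or a reduced $0$-dimensional scheme of cardinality the mixed volume $\operatorname{MV}(\Gamma)$, resp.\ $\operatorname{MV}(\Gamma')$, constant on this open set. The only way \ref{it:vg_not_vgprime} can fail for $(\Gamma,\Gamma')$ is that both are nonempty and equal, which forces $\operatorname{MV}(\Gamma)=\operatorname{MV}(\Gamma')$ and is then equivalent to the single containment $\VKa(f_\Gamma)\subseteq\VKa(f_{\Gamma'})$. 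Viewed over the finite \'etale family parametrising the roots of $f_\Gamma$, this containment is the vanishing of a section of a locally free sheaf of rank $\operatorname{MV}(\Gamma)$ --- equivalently the vanishing of the sparse resultant $\operatorname{Res}(f_{1\Gamma},\dots,f_{n\Gamma},f_{i\Gamma'})$ for each $i\in[n]$ --- hence a closed condition. So the locus violating \ref{it:vg_not_vgprime} for the pair is closed inside the open set where \ref{it:vgi_smooth} holds, and its complement is open in $K[A]$. Nonemptiness of the final intersection I would check by a direct construction: choose the coefficients of $f_\Gamma$ generically and note that $\Gamma\ne\Gamma'$ forces $f_{\Gamma'}$ to use coefficients not constrained by $\VKa(f_\Gamma)$.

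The main obstacle is \ref{it:coordinate_not_non-prop}, since $\cS(f)$ depends on $f$ only constructibly (through elimination), not polynomially. As $\cS(f)$ is empty or pure of codimension $1$ and $\overline{f(H)}$ is irreducible of dimension $\le n-1$, the condition $\dim\bigl(\cS(f)\cap f(H)\bigr)\ge n-1$ is equivalent to ``$\overline{f(H)}$ is an irreducible component of $\cS(f)$'', and I would show that this bad locus is Zariski closed in $K[A]$. The family $\{(f,y):y\in\overline{f(H)}\}$ is, over a dense open subset of $K[A]$, the Zariski closure of the image of the closed set $\{(f,z',y):z'\in H,\ f(z')=y\}\subset K[A]\times H\times K^n$, while $\{(f,y):y\in\cS(f)\}$ is constructible in $K[A]\times K^n$ with closed fibres. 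I would stratify $K[A]$ into finitely many locally closed pieces over which both families are flat, apply upper semicontinuity of fibre dimension to the intersection family on each, and use that dominance of $f$ (an open condition, $\det\mathrm{D}f\not\equiv0$) together with \ref{it:vgi_smooth} already isolates a dense open set on which the branches at infinity governing $\cS(f)$ form an equidimensional family, while $\overline{f(H)}$ --- the image of a coordinate hyperplane --- is controlled by the origin-type faces of $\Delta^0$ and stays out of the dicritical data producing $\cS(f)$. Assembling these, the bad locus is closed, which finishes the proof.
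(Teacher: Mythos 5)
Your overall strategy --- splitting Definition~\ref{def:face-generic} into its three conditions, quantifying over the finitely many faces $\Gamma$, subsets $I$ and coordinate hyperplanes $H$, and showing each resulting locus is open --- is exactly the paper's. Your treatment of \ref{it:vgi_smooth} (Bertini--Sard/discriminants for smoothness, resultant codimension for the dichotomy~\eqref{eq:def:Newton}) matches the paper's, and your resultant-based treatment of \ref{it:vg_not_vgprime} is an acceptable variant of the paper's more elementary coefficient-perturbation argument. One caveat on \ref{it:vg_not_vgprime}: for a proper face $\Gamma$ the system $f_\Gamma=\unze$ is quasi-homogeneous, so $\VKa(f_\Gamma)$ is never a nonempty finite set; your description of it as a reduced $0$-dimensional scheme of cardinality the mixed volume is only accurate for the improper face. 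Since you work inside the locus where \ref{it:vgi_smooth} already holds (which forces $\VKa(f_\Gamma)=\emptyset$ for every proper face), this does not sink the argument, but it signals a misreading of what these zero sets look like.

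The genuine gap is in \ref{it:coordinate_not_non-prop}. Stratifying $K[A]$ into locally closed pieces and applying upper semicontinuity of fibre dimension stratum by stratum shows only that the bad locus is \emph{constructible}: a union of subsets each closed in its own stratum need not be closed in $K[A]$, and you give no control of specialization across stratum boundaries, so the concluding sentence ``Assembling these, the bad locus is closed'' is unsupported. Worse, the geometric input you invoke --- that the branches at infinity governing $\cS(f)$ are dicritical while $\overline{f(H)}$ is controlled by the origin-type faces and ``stays out of the dicritical data'' --- is essentially the content of Proposition~\ref{prop:algebraic_correspondence} and Lemma~\ref{lem:resultant_disjoint_from_coordinate}, which the paper establishes only \emph{for almost face-generic maps}; using it here, while proving that almost face-genericity is open, is circular. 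The paper avoids both problems by passing to the enlarged parameter space $M=K[A_1\cup\{0\},\ldots,A_n\cup\{0\}]$: it compactifies the incidence variety torically, observes that both $\eta_f(\cS(f))$ and $\eta_f(f(\cH))$ lie in $L_f\cap\overline{P}(\partial\overline{Z})$ with boundary components indexed by faces of $\Delta^0$, and applies Bertini--Sard once on $M$ to obtain a single open set $\cO$ on which every $L_f$ meets each stratum of $\overline{P}(\partial\overline{Z})$ transversally, so that no top-dimensional stratum is shared. Some such uniform transversality statement on the parameter space is what your sketch is missing.
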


\begin{proof}
Let $\Omega_a$ and $\Omega_b$ be the two subsets consisting of tuples in $K^A$ satisfying conditions~\ref{it:vg_not_vgprime} and~\ref{it:vgi_smooth} respectively. 
It is enough to show that each of these sets is Zariski open in $K^A$. 

Concerning~\ref{it:vg_not_vgprime}: We have $\Gamma_i\neq \Gamma'_i$ for some $i\in [n]$. Let $a$ be any lattice point in $\Gamma_i\setminus\Gamma_i'$ (the case $a\in \Gamma_i'\setminus\Gamma_i$ is analogous). Then, for any $f\in K^A$, the monomial term $cz^a$ appears in the ideal $\langle f_\Gamma\rangle$ generating $\VKa(f_\Gamma)$, but not in $\langle f_{\Gamma'}\rangle$. Since $K$ has characteristic zero, for all but finitely-many $c\in K$, we have $\langle f_\Gamma\rangle\neq \langle f_{\Gamma'}\rangle$. This yields Item~\ref{it:vg_not_vgprime}.

Concerning~\ref{it:vgi_smooth}: Consider the projection $P_{I\Gamma}:Z_{I\Gamma}\subset \bT\times K^A\to K^A$, where $Z_{I\Gamma}:=\{(z,f)~|~f_{I\Gamma}=\unze \}$. Let $\Crit P_{I\Gamma} $ denote the set of all $(z,f)$ for which $\VKa(f_{I\Gamma})$ is not a smooth irreducible complete intersection at $z\in\bT$. Then, $K^A\setminus P_{I\Gamma}(\Crit P_{I\Gamma})$ is Zariski open in $K^A$ follows from Bertini-Sard Theorem.

Concerning Equation~\eqref{eq:def:Newton}: The system $f_{I\Gamma} = \unze$ having a solution in $\bT$, implies that $f$ lies in the multivariate resultant $\Res_{I\Gamma}^A\subset K^A$. This is an algebraic set in $K^A$, satisfying the following well-known equality (see also~\cite{St94,Est13})
\begin{equation}\label{eq:resultants}
 \cdim \Res_{I\Gamma}^A =  \max_{J\subset I}\big(\#J -\dim \sum_{i\in J}\Gamma_i\big).
\end{equation} Therefore, $\cdim\Res_{I\Gamma}^A>0$ if $I\subset [n]$ satisfies 
\begin{equation}\label{eq:inequality_resultant_codim}
\# I > \dim \sum I\Gamma.
\end{equation} Thus, it is enough to choose $\Omega_b\subset K^A$ to be the largest subset that avoids subsets $P_{I\Gamma}(\Crit P_{I\Gamma})$ for each $\Gamma\prec\Delta$ and each $I\subset [n]$, as well as resultants $\Res_{I\Gamma}^A$, for those $I$ satisfying~\eqref{eq:inequality_resultant_codim}. As subsets $\Res_{I\Gamma}$ are Zariski closed, and since there are finitely-many faces $I\Gamma\prec I\Delta$ of sub-tuples, the set $\Omega_b$ becomes Zariski open.
\end{proof}

\begin{definition}\label{def:r-star-gamma}
Let $\Gamma$ be a proper face of $\Delta$, and let $X_\Gamma$ denote the set  
\[
\left\{(z,w)\in \bT\times K^n~\left|~(f(z)-w)_\Gamma =0\right\}\right. .
\] The \emph{face-resultant} of the map $f$ at the face $\Gamma$, denoted by $\cR_{\Gamma}(f)$, is the closure in $K^n$ of the image of  $X_\Gamma$ under the projection $\pi:~(z,w)\mapsto w$.
\end{definition}

We illustrate face-resultants in the following example.\\

\begin{example}\label{ex:rgamma}
Let $f$ be the below triple of polynomials in three variables $a,b,c$, defined over the field $\K$ of generalized Puiseux series in $t$:
{\small
\begin{equation}\label{eq:main_example3}
\begin{array}{ll}
 f_1:= & abc + ~ bc +~a^2b^2c^2,\\[4pt]
 f_2:= & ab +~ abc^2+~t^7~bc^2+~t^3~b^2c^4+~t^2~ab^2c^4+~t^5~a^2b^2c^4,\\[4pt]
 f_3:= & t~a +~ ab+~abc+~a^2bc. 
\end{array}
\end{equation}}The triple of polytopes $\Delta$ in Figure~\ref{fig:three_polytopes} correspond to~\eqref{eq:main_example3}. Let $\Gamma\prec\Delta$ be the triple highlighted in green. That is, $\Gamma_i = \conv(S_i)$, where 
{\small 
		\[S_1:=\left(
\begin{smallmatrix}
  0 &  0 & 2 \\
  0 &  1 & 2 \\
  0 &  1 & 2 \\
\end{smallmatrix}
\right),\quad 
S_2:=\left(
\begin{smallmatrix}
  0 &  2  \\
  2 &  2 \\
  4 &  4 \\
\end{smallmatrix}
\right),\quad 
S_3:=\left(
\begin{smallmatrix}
  0 & 1 & 1 & 2 \\
  0 & 0 & 1 & 1 \\
  0 & 0 & 1 & 1 \\
\end{smallmatrix}
\right).
 \]
} Then $(f - w)_{\Gamma}$ is the triple
{\small
\begin{equation}\label{eq:main_example_restricted}
\begin{array}{lll}
 (f_1 - w_1)_{\Gamma}= & f_1 - w_1 &=  abc + ~ bc +~a^2b^2c^2 - ~~w_1,\\[4pt]
 (f_2 - w_2)_{\Gamma}= & f_{2\Gamma} &=  ~t^3~b^2c^4+~t^2~ab^2c^4+~t^5~a^2b^2c^4,\\[4pt]
 (f_3 - w_3)_{\Gamma}= & f_{3\Gamma} - w_3& =  t~a +~abc+~a^2bc -~~w_3,
\end{array}
\end{equation}
} and $\crsg(f)$ is the surface defined by the below polynomial in $\K[w_1,w_3]\subset \K[w_1,w_2,w_3]$:
{\small
\begin{multline*}
t^4( -2 +4t +t^3)+~t(1 -2t -4t^3 +6t^4+2t^6)~w_1  +~t^3(	-2 +3t+4 t^2+t^3)~w_3 \\
 +~t(1-2t +t^2 -2t^3 +2t^4 +t^6)~w_1^2 
 + (1 -2t -t^2 -2t^3 +8t^5 +t^6)~w_1w_3\\
  +~t^3( -1 +4 t +t^2 -2t^3 +4t^4 + t^6)~w_3^2   +~2t(-1 +t^3 +t^4 -t^6 )~w_1w_3^2 +~t^4(1 -t^2 +4t^3 )~w_3^3 +~t^7- ~~w_3^4.
\end{multline*} 
}
Now, if we pick $\Gamma\prec\Delta$ be the triple from Figure~\ref{fig:three_polytopes} highlighted in red, then $\Gamma_i = \conv(S_i)$, where 
{\small 
		\[S_1:=\left(
\begin{smallmatrix}
  0 &   2 \\
  0 &   2 \\
  0 &  2 \\
\end{smallmatrix}
\right),\quad 
S_2:=\left(
\begin{smallmatrix}
  0 & 1 & 2  \\
  0 & 1 & 2 \\
  0 & 0 & 4 \\
\end{smallmatrix}
\right),\quad 
S_3:=\left(
\begin{smallmatrix}
  1  & 2 \\
  0  & 1 \\
   0  & 1 \\
\end{smallmatrix}
\right).
 \]
} Then $\crsg(f)$ is the hyperplane defined by the polynomial $t^2 - t - w_1$.
\end{example}

 We finish this part by proving three technical properties about the face-resultants. Assume in what follows that $f$ is almost face-generic.\\

\begin{lemma}\label{lem:proj-codim>1}  
Let $\Gamma$ be a proper face of $\Delta$. Then, the face-resultant $\crsg(f)$ is empty if $\Gamma$ is not semi-origin (see Definition~\ref{def:dicritical-tupe}), and $\dim\crsg(f)\leq n-1$ otherwise. Furthermore, for any other proper face $\Gamma'$ of $\Delta$, it holds that $\dim\crsg(f)\cap\cR_{\Gamma'}(f)\leq n-2$.
\end{lemma}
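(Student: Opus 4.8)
The plan is to track dimensions through the projection $\pi\colon X_\Gamma\to\TKn$ using the three parts of almost face-genericity, and then argue about intersections via part \ref{it:vg_not_vgprime}. First I would observe that $X_\Gamma=\{(z,w)\in\bT\times\bT\mid (f(z)-w)_\Gamma=0\}$ decomposes according to which monomials of $f_i-w_i$ survive on the face $\Gamma_i$: the variable $w_i$ appears in $(f_i-w_i)_{\Gamma_i}$ precisely when $\unze\in\Gamma_i$, i.e.\ when $\Gamma_i$ is an ``origin'' face in the sense of Definition \ref{def:face-types}. If $\Gamma$ is not pre-origin, then $\unze\notin\Gamma_i$ for every $i$, so the defining equations $(f(z)-w)_\Gamma=0$ read $f_{i\Gamma}(z)=0$ for all $i$ — they do not involve $w$ at all. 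Thus $X_\Gamma=\VKa(f_{[n]\Gamma})\times\bT$, and by condition \ref{it:vgi_smooth} applied with $I=[n]$, this set is empty unless $n-\dim\Sigma\Gamma\le 0$; but $\Gamma$ is a \emph{proper} face of $\Delta^0$, so $\dim\Sigma\Gamma\le n-1<n$, forcing $\VKa(f_{[n]\Gamma})=\emptyset$ and hence $\crsg(f)=\emptyset$. This settles the first claim.

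Next, suppose $\Gamma$ is pre-origin. Let $I_0:=\{i\in[n]\mid \unze\in\Gamma_i\}\neq\emptyset$ be the indices for which $w_i$ occurs in $(f_i-w_i)_{\Gamma_i}$, and let $J_0:=[n]\setminus I_0$. Then the equations cutting out $X_\Gamma$ split as: $f_{j\Gamma}(z)=0$ for $j\in J_0$ (no $w$), together with $w_i=f_{i\Gamma}(z)$ for $i\in I_0$ — here I use that, after the constant term $\unze\in\Gamma_i$ is included, the face-restriction $(f_i-w_i)_{\Gamma_i}$ is $f_{i\Gamma}(z)-w_i$ up to the contribution of monomials of $f_i$ lying on $\Gamma_i$; in other words the $w_i$-coordinate is a regular function of $z$ on the locus $\VKa(f_{J_0\Gamma})$. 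Consequently $\pi$ restricted to $X_\Gamma$ factors through the graph of the map $z\mapsto (f_{i\Gamma}(z))_{i\in I_0}$ over $\VKa(f_{J_0\Gamma})\subset\bT$, so
\[
\dim \crsg(f)\le \dim \VKa(f_{J_0\Gamma}).
\]
If $J_0=\emptyset$ then $\VKa(f_{J_0\Gamma})=\bT$ has dimension $n$, which is not yet good enough, so I would instead bound $\dim\crsg(f)$ directly: the fibre of $\pi|_{X_\Gamma}$ over a point $w$ in the image is $\{z\in\bT\mid f_{i\Gamma}(z)=w_i,\ i\in I_0,\ f_{j\Gamma}(z)=0,\ j\in J_0\}$, and a generic such fibre is positive-dimensional exactly because $\Sigma\Gamma$ is a proper face: the $n$ polynomials $f_{1\Gamma},\dots,f_{n\Gamma}$ have supports contained in $\Gamma_1,\dots,\Gamma_n$, whose Minkowski sum has dimension $\le n-1$, so by the Bernstein/Kushnirenko count (invoked via the mixed-volume reasoning behind condition \ref{it:vgi_smooth} and Equation \eqref{eq:def:Newton}) the fibres of $z\mapsto (f_{i\Gamma}(z))_i$ on which the $J_0$-equations also vanish have dimension $\ge n-\dim\Sigma\Gamma\ge 1$. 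Combining $\dim X_\Gamma = n$ with fibre dimension $\ge 1$ gives $\dim\crsg(f)\le n-1$. The smoothness part of \ref{it:vgi_smooth} is what guarantees the fibre dimension is \emph{constant} generically, so this dimension count is clean rather than merely generic-upper-semicontinuous.

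Finally, for the last assertion, take another proper face $\Gamma'\prec\Delta^0$. If either face fails to be pre-origin the intersection is empty by the first claim, so assume both are pre-origin. Each of $\crsg(f)$, $\cR_{\Gamma'}(f)$ is (the closure of) an irreducible, or at worst equidimensional, hypersurface-or-smaller in $\TKn$; it suffices to show they share no common $(n-1)$-dimensional component. Suppose they did share a component $Y$ of dimension $n-1$. Pulling $Y$ back under $\pi$ to $X_\Gamma$ and $X_{\Gamma'}$ respectively and projecting to the $z$-coordinate, $Y$ forces the equality of the image loci $\VKa(f_\Gamma)$ and $\VKa(f_{\Gamma'})$ in $\bT$ after the reparametrisation $z\mapsto(f_{i\Gamma}(z))$ — more precisely, a shared top-dimensional component of the two face-resultants would, by the explicit fibre description above, pull back to a shared top-dimensional piece of $\VKa(f_{J_0\Gamma})$ and $\VKa(f_{J_0'\Gamma'})$, contradicting condition \ref{it:vg_not_vgprime}, which says $\VKa(f_\Gamma)\neq\VKa(f_{\Gamma'})$ whenever $\Gamma\neq\Gamma'$ and both are non-empty (and the smoothness in \ref{it:vgi_smooth} rules out the degenerate coincidence of distinct varieties having a common component of full dimension). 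Hence $\dim\big(\crsg(f)\cap\cR_{\Gamma'}(f)\big)\le n-2$.

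\textbf{Main obstacle.} The delicate point is the bookkeeping in the second paragraph: I must correctly identify, for a pre-origin face $\Gamma$, exactly which coordinates $w_i$ appear in $(f(z)-w)_\Gamma$ and show that on the fibre the map $z\mapsto(f_{i\Gamma}(z))_{i\in I_0}$ together with the constraints $f_{j\Gamma}(z)=0$ has fibres of dimension $\ge 1$ — i.e.\ that the ``virtual'' system is genuinely under-determined because $\Sigma\Gamma$ is a proper face. Translating ``$\dim\Sigma\Gamma\le n-1$'' into ``$\pi|_{X_\Gamma}$ has positive-dimensional generic fibre'' is where condition \ref{it:vgi_smooth} and the Bernstein-type count in \S\ref{sub:Bernstein} have to be used carefully; getting the edge cases ($J_0=\emptyset$, or $\Gamma_i$ a single non-origin vertex) right is the crux.
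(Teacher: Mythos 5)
Your proposal follows essentially the same route as the paper's proof: for non-pre-origin $\Gamma$ the defining equations of $X_\Gamma$ do not involve $w$, so condition~\ref{it:vgi_smooth} with $I=[n]$ forces emptiness; for pre-origin $\Gamma$ the bound $\dim\crsg(f)\le n-1$ comes from the positive-dimensional fibres of $\pi|_{X_\Gamma}$ (the paper phrases this via quasi-homogeneity of $(f-w)_\Gamma$, you via the equivalent Bernstein/Kushnirenko count from $\dim\Sigma\Gamma\le n-1$); and the intersection claim is ruled out by showing a shared top-dimensional component would force the two face-resultants to coincide, contradicting condition~\ref{it:vg_not_vgprime}. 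The only presentational difference is in the last step, where the paper passes through irreducibility of the smooth sets $X_\Gamma$, $X_{\Gamma'}$ to conclude $X_\Gamma=X_{\Gamma'}$, while you argue directly on the $z$-projections; both arguments are at the same level of detail.
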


\begin{proof}
Keeping the notations of Definition~\ref{def:r-star-gamma}, if $\Gamma$ is not semi-origin, then for any $w\in \K^n$, it holds that $(f-w)_\Gamma = f_\Gamma $. Equation~\eqref{eq:def:Newton} of Definition~\ref{def:face-generic} shows that $X_\Gamma=\emptyset$, and thus its projection under $\pi$ is empty. 

 Let $\Gamma$ be a semi-origin face. Now, we show that $\dim\crsg(f)\leq n-1$. Note that if $\dim X_\Gamma\leq n-1$, then $\dim \pi(X_\Gamma)\leq n-1$ and we are done. 
 
 Assume in what follows that $\Gamma$ is semi-origin, and that $\dim X_\Gamma \geq n$. Recall that $\VKa(f_{I\Gamma})$ is a complete intersection for every $I\subset[n]$ (Definition~\ref{def:face-generic}~\ref{it:vgi_smooth}). Then, it is an easy computation, using the Jacobian matrix $\Jac_{(z,w)} (f-w)_{\Gamma}$, to show of $X_\Gamma$ is either empty or is a complete intersection. We thus get $\dim X_\Gamma = n$. Since $\dim \sum\Gamma\leq n-1$, up to multiplying each polynomial by an appropriate monomial, for any  $w\in K^n$, the system $(f-w)_\Gamma =\unze$ is formed by quasi-homogeneous polynomials in $z$. Consequently, the fibers under the map $\pi_{|X_\Gamma}:X_\Gamma\to\pi(X_\Gamma) $ have positive dimension. Hence, we get $\dim X_\Gamma \geq \dim \pi(X_\Gamma) + d $ for some $d>0$, and thus $\dim\crsg(f)=\dim \pi(X_\Gamma)\leq n-1$. This yields the proof of the first part of the Lemma.

Now we show that $\dim\crsg(f)\cap\cR_{\Gamma'}(f)\leq n-2$. By contradiction, assume that $\crsg(f)$ and $\cR_{\Gamma'}(f)$ share an $(n-1)$-dimensional component. Since $f$ is almost face-generic, both $X_\Gamma$ and $X_{\Gamma'}$ are smooth, and thus irreducible. Consequently we obtain $\crsg(f) =\cR_{\Gamma'}(f)$. Recall that $\pi$ has fibers of positive dimension when restricted to either $X_\Gamma$ or $X_{\Gamma'}$. Therefore, the equality
\[
\dim\cR_\Gamma(f) =\dim\cR_{\Gamma'}(f) =n-1
\] implies that $X_\Gamma =X_{\Gamma'}$. This contradicts Definition~\ref{def:face-generic}~\ref{it:vg_not_vgprime} as $f$ is almost face-generic and $\Gamma\neq\Gamma'$. 
\end{proof}

For any $i\in[n]$, we use $H^{\R}_i$ and $H^{K}_i$ to denote the $i$-th coordinate hyperplane $\{x_i=0\}$ of $\R^n$ and of $K^n$ respectively. The following statement follows directly from the definitions of $f(H^{K}_i)$ and $\crsg(f)$.\\

\begin{lemma}\label{lem:resultant_disjoint_from_coordinate}
Assume that $\Delta$ has a face $\Gamma\prec\Delta$ satisfying $\sum\Gamma= \sum\Delta\cap H^{\R}_i$ for some $i\in [n]$. Then, it holds that $f(H^{K}_i) = \crsg(f)$. 

\end{lemma}
 Recall that a face $\Gamma\prec\Delta$ is called dicritical if one of the a supporting vectors of $\sum\Gamma$ has one positive coordinate and one non-positive coordinate (see~\S\ref{sub:prelim} and Definition~\ref{def:dicritical-tupe}).\\

\begin{lemma}\label{lem:face-inclusion_propertuy}
Let $\Gamma\prec\Delta$ be a dicritical face satisfying $\sum\Gamma\subset H_{i}^{\R}$ for some $i\in [n]$. Then, there exists another dicritical face $\Gamma'\prec\Delta$ (see e.g., Example~\ref{ex:face_in_face}) satisfying \textbf{(a)} $\Gamma\prec\Gamma'$, \textbf{(b)} $\sum\Gamma'\not\subset H_j^{\R}$ for all $j\in [n]$, and 
\begin{equation}\label{eq:inclusion_face-resultant}
\mathcal{R}_\Gamma(f)\subset\mathcal{R}_{\Gamma'}(f).
\end{equation}
\end{lemma}

\begin{proof}
Since $\Gamma$ is dicritical and $\sum\Gamma\subset H_i^{\R}$, we get that $\sum\Gamma$ contains the origin $\{\unze\}$, and its affine span $L_\Gamma:=\afff(\sum\Gamma)$ is a proper linear subspace of $H_i^{\R}$. Consider the orthogonal linear subspace $L_\Gamma^\perp\subset\R^n$ generated by the kernel of $L_\Gamma$. Then, the projection $\pi_{\Gamma}:\R^n\longrightarrow L_{\Gamma}^\perp$ sends $\sum\Delta$ to  the polytope $\Sigma:=\pi_{\Gamma}(\sum\Delta)\subset L_{\Gamma}^\perp\cong\R^\nu$, where $\nu:=\cdim L_\Gamma$. Furthermore, the face $\sum\Gamma$ is sent to the origin $\{\unze\}\subset L_\Gamma^\perp\subset\R^n$, and the latter is a vertex of $\tilde{\Delta}$.  Then, one can choose a face $F\subset\Sigma$ satisfying the following properties:\\
\begin{enumerate}

	\item $\dim F>0$,
	
	\item $\{\unze\}\subset F$,
	
	\item $F$ does not belong to any coordinate hyperplane of $\R^n$, and
	
	\item there exists an outer normal vector $\alpha_F\in L_\Gamma^{\perp}$ to $F$ satisfying the dicritical property (see Definition~\ref{def:dicritical-tupe}).\\
\end{enumerate}
The existence of $F$ above follows from $\{\unze\}$ being a vertex of $\Sigma$. Then, there exists a face $F'\subset\sum\Delta$ such that $F=\pi_\Gamma(F')$. Furthermore, we have $\alpha_F$ is an outer normal vector to $F'$. Consequently, we choose the face $\Gamma'\prec\Delta$ so that $F'=\sum\Gamma'$. We leave it to the reader to check that $\Gamma'$ is dicritical and properties \textbf{(a)} and \textbf{(b)} on $\Gamma'$ follow from the four criteria above on $F$.

Concerning relation~\eqref{eq:inclusion_face-resultant}, we use the well-known property on multivariate resultants (see, e.g.~\cite[Lemma 2.3, (3)]{Est13})
\begin{equation}\label{eq:lem-Esterov}
\Gamma\prec\Gamma'\Longrightarrow \Res_{\Gamma}^{\Delta}\subset \Res_{\Gamma'}^{\Delta}.
\end{equation} Note that, for a fixed map $f$, a face-resultant $\cR_\Gamma(f)$ is identified as the subset of points $(g,w)\in \Res_\Gamma^\Delta$ satisfying $g=f$. That is, it holds that 
\[
\cR_\Gamma(f)\cong\Res_\Gamma^\Delta\cap \left\lbrace (f,-w)\in K^A\times K^n~\left|~w\in K^n \right\rbrace \right. . 
\] Hence, Relation~\eqref{eq:inclusion_face-resultant} follows from Property \textbf{(a)} and~\eqref{eq:lem-Esterov}.
\end{proof}

\begin{example}\label{ex:face_in_face}
An instance for properties \textbf{a)} and \textbf{b)} of Lemma~\ref{lem:face-inclusion_propertuy} appear in Figure~\ref{fig:three_polytopes}; if $\Gamma\prec\Delta$ is the horizontal face satisfying $\Gamma_i = \conv(S_i)$, where 
{\small 
		\[S_1:=\left(
\begin{smallmatrix}
  0  \\
  0 \\
  0 \\
\end{smallmatrix}
\right),\quad 
S_2:=\left(
\begin{smallmatrix}
  0   \\
  0  \\
  0 \\
\end{smallmatrix}
\right),\quad 
S_3:=\left(
\begin{smallmatrix}
  0 & 1 \\
  0 & 0 \\
  0 & 0 \\
\end{smallmatrix}
\right).
 \] 
 } Then, the face $\Gamma'$ can be chosen so that 
 {\small 
		\[S'_1:=\left(
\begin{smallmatrix}
  0  \\
  0 \\
  0 \\
\end{smallmatrix}
\right),\quad 
S'_2:=\left(
\begin{smallmatrix}
  0 & 0 & 2  \\
  0 & 2 & 2 \\
  0 & 4 & 4\\
\end{smallmatrix}
\right),\quad 
S'_3:=\left(
\begin{smallmatrix}
  0 & 1 \\
  0 & 0 \\
  0 & 0 \\
\end{smallmatrix}
\right).
 \] 
 }
\end{example}

\subsection{Topological degree of a map}\label{sub:Bernstein} If $\Pi$ is a convex body in $\R^n$, we use $\Vol_n(\Pi)$ to denote its Euclidean volume. Assume now that $\Pi$ is a collection of $n$ convex bodies $\Pi_1,\ldots,\Pi_n\subset\R^n$. Then the classical notion of \emph{mixed volume} of $\Pi_1,\ldots,\Pi_n$ is defined as
\[
V(\Pi)=V(\Pi_1,\ldots,\Pi_n):=\frac{1}{n!}\sum_{k=1}^n(-1)^{k+n}\sum_{i_1<\cdots<i_k}\Vol_n(\Pi_{i_1}+\cdots+\Pi_{i_k}),
\] where the summation between polytopes is the Minkowski sum. For example, if $n=2$, we have 
\[
V(\Pi)=\Vol_2(\Pi_1+\Pi_2) - \Vol_2(\Pi_1) - \Vol_2(\Pi_2).
\]

We have the following well-known version of Bernstein's Theorem~\cite{Ber75} (see e.g.~\cite{rojas1999toric}).\\
\begin{theorem}\label{th:Bernstein}
Let $g_1,\ldots,g_n$ be polynomials in $n$ variables $K[z_1,\ldots,z_n]$, such that $V(\Pi_1,\ldots,\Pi_n)\neq 0$, where $\Pi_1,\ldots,\Pi_n$ denote their respective Newton polytopes in $\R^n$, and let $I^*(g)$ denote the number of isolated solutions in $\VKa(g)$, counted with multiplicities. Then, it holds that $I^*(g) \leq V(\Pi_1,\ldots,\Pi_n)$, with equality if and only if $g_\Gamma = \unze$ does not have solutions in $\bT$ for any proper face $\Gamma$ of $\Pi$.
\end{theorem}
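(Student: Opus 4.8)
The plan is to deduce the statement from toric intersection theory, following the classical arguments of Kushnirenko and Bernstein. First I would pass from the torus $\bT=(K\setminus 0)^n$ to a complete toric compactification: choose a smooth projective toric variety $X$ whose fan refines the inner normal fans of all the Newton polytopes $\Delta_1,\ldots,\Delta_n$. Each $\Delta_i$ then determines a globally generated line bundle $L_i$ on $X$, and the coefficients of $g_i$ give a global section whose zero divisor $Z_i\subset X$ restricts to $\VKa(g_i)$ on the dense torus. The orbit stratification of $X$ is indexed by the proper faces $\Gamma$ of $\Delta:=\Delta_1+\cdots+\Delta_n$; on the orbit $O_\Gamma$ the divisor $Z_i$ is cut out by the face polynomial $g_{i\Gamma}$, so $Z_1\cap\cdots\cap Z_n$ meets $O_\Gamma$ exactly when the face system $g_\Gamma=\unze$ has a solution there. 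The hypothesis $V(\Delta_1,\ldots,\Delta_n)\neq 0$ is Minkowski's nondegeneracy condition $\dim\sum_{i\in I}\Delta_i\geq |I|$ for all $I\subset[n]$, which makes $0$ the expected dimension of the intersection, and the basic toric computation gives the (deformation-invariant) intersection number $L_1\cdots L_n=V(\Delta_1,\ldots,\Delta_n)$ in the normalization of the statement.

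Next I would settle the generic case: since the linear systems $|L_i|$ are base-point free on $X$, a Bertini--Kleiman argument shows that for $g$ in a Zariski-dense subset of $K[A]$ the divisors $Z_i$ meet transversally, their common intersection is a finite reduced subscheme contained in $\bT$ (on every boundary orbit the expected codimension $n$ exceeds the dimension, so the intersection is generically empty there), and hence $I^*(g)=L_1\cdots L_n=V(\Delta_1,\ldots,\Delta_n)$. The inequality for an arbitrary $g$ with $V(\Delta_1,\ldots,\Delta_n)\neq0$ then follows by conservation of number: deform $g$ inside $K[A]$ to such a generic system $g_s$, $s\neq 0$, and let $s\to 0$; each of the $V(\Delta_1,\ldots,\Delta_n)$ solution paths either converges to an isolated point of $\VKa(g)$ --- and the number of paths converging to a given isolated solution $p$ equals $\operatorname{mult}_p(g)$ --- or leaves every compact subset of $\bT$, i.e.\ converges to a boundary point of $X$, or is absorbed by a positive-dimensional component of $\VKa(g)$. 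In all cases $I^*(g)\leq V(\Delta_1,\ldots,\Delta_n)$. Equivalently, nefness of the $L_i$ makes every local intersection contribution nonnegative and at least $1$ at an isolated point.

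For the equality criterion, the hypothesis that $g_\Gamma=\unze$ has no $\bT$-solution for any proper face $\Gamma$ says $Z_1\cap\cdots\cap Z_n\subset\bT$; since $\bT$ is affine and $X$ complete, this intersection is then $0$-dimensional (a positive-dimensional component would have projective closure meeting some boundary orbit $O_\Gamma$ on which the $Z_i$ restrict to $\VKa(g_{i\Gamma})$, producing a $\bT$-solution of $g_\Gamma$). Hence all of $V(\Delta_1,\ldots,\Delta_n)$ is carried by isolated torus points counted with multiplicity, so $I^*(g)=V(\Delta_1,\ldots,\Delta_n)$. Conversely, if some proper face system $g_\Gamma=\unze$ has a solution $z^\ast$, then $z^\ast$ lies on a connected component $Z$ of $Z_1\cap\cdots\cap Z_n$ disjoint from $\bT$ (or meeting the boundary), and a Newton--Puiseux analysis of the solution paths of the generic deformation shows that such a component absorbs at least one path --- the lowest-order term of any branch of $\VKa(g_s)$ degenerating onto $O_\Gamma$ solves the initial system selected by the face direction of $\Gamma$. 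Thus at least one unit of multiplicity is subtracted from the torus count and $I^*(g)\leq V(\Delta_1,\ldots,\Delta_n)-1$.

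The main obstacle is precisely this converse half of the equality statement: controlling the boundary contribution, equivalently proving that whenever a face system $g_\Gamma$ acquires a $\bT$-solution a root of a generic perturbation genuinely degenerates onto the orbit $O_\Gamma$. The cleanest route is the Newton--Puiseux (tropicalization) bookkeeping of Bernstein's original proof: along a deformation parameter $s$, any solution path leaving the compact part of $\bT$ has a leading exponent vector $w\in\R^n\setminus\{\unze\}$, and substituting the corresponding expansion into $g_s$ and extracting the lowest order in $s$ forces the leading coefficients to satisfy $g_\Gamma=\unze$ with $\Gamma$ the face of $\Delta$ selected by $w$. Packaging this analysis --- including the count of how many paths each face contributes --- is where the real work lies; the passage from it to both the inequality and the equality criterion is then formal, via the projection formula and generic transversality on $X$.
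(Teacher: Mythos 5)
The paper offers no proof of this statement: it is quoted as a ``well-known version of Bernstein's theorem'' with citations to Bernstein's original article and to Rojas's toric intersection-theoretic account, so there is no in-paper argument to compare yours against. Your outline is precisely the standard proof from those sources: compactify over a smooth fan refining the normal fans, identify $L_1\cdots L_n$ with $V(\Delta_1,\ldots,\Delta_n)$, obtain the generic count from base-point-freeness and a Bertini/Kleiman argument, deduce the inequality from nonnegativity of local contributions (equivalently, conservation of number under deformation), and read off the equality criterion from whether $Z_1\cap\cdots\cap Z_n$ meets the boundary orbits, which are exactly where the face systems $g_\Gamma=\unze$ live. All of this is correct, including your identification of $V\neq 0$ with Minkowski's condition $\dim\sum_{i\in I}\Delta_i\geq |I|$ and your (correct) reading of ``face'' as \emph{proper} face.

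The one place where your text is a sketch rather than a proof is the one you flag yourself: the converse half of the equality criterion, i.e.\ that a $\bT$-root of some proper face system forces a genuine loss of at least one unit of intersection multiplicity to the boundary. This does not follow formally from nefness alone (a nef local contribution could a priori vanish); it requires either Bernstein's Puiseux-expansion bookkeeping of escaping solution paths or the refined excess-intersection analysis on the orbit closures. Since the theorem is invoked in the paper as a known result, relying on the cited references for that step is legitimate, but be aware that it is the genuinely hard point and your proposal does not close it on its own.
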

\ \\
\begin{definition}\label{def:top-degree}
Let $f:K^n\to K^n$ be a polynomial map. We say that $f$ is \emph{dominant} if the determinant of its Jacobian matrix is a polynomial that is not identically zero. If $f$ is dominant, there is an integer $k\in\N$ and a Zariski open subset $W\subset K^n$ for which $f^{-1}(w)$ has exactly $k$ isolated points for any $w\in W$. We call the value $k$ the \emph{topological degree of $f$}, and we denote it by $\mu(f)$.
\end{definition}
\begin{remark}\label{rem:top-degree-non-prop}
The Jelonek set $\cS(f)$ of the map $f$ from Definition~\ref{def:top-degree} can be defined as the set of points $w\in K^n$ at which the polynomial system $f-w=\unze$ has less than $\mu(f)$ isolated solutions in $K^n$ counted with multiplicities.
\end{remark}

\ \\

\begin{lemma}\label{lem:preimages}
For any $f\in K^A$ representing a dominant almost face-generic map $K^n\to K^n$, we have $\mu(f)=V(\Delta)$, where $\Delta$ is defined in Equation~\eqref{eq:delta_0}.
\end{lemma}

\begin{proof}
Take $W=f(\bT)\cap (\cR\cup \cS(f))$, where $\cR$ is the union of all face-resultants of $f$. Recall that $f$ is dominant, and the Jelonek set is a hypersurface~\cite{Jel93}. These facts, together with Proposition~\ref{prp:face-generic-open}, imply that $W$ is Zariski open in $K^n$. Since $w\in f(\bT)$, we get $f^{-1}(w)\subset \bT$. By definition, the map is finite outside $\cS(f)$, and thus has a constant number of preimages (counted with multiplicities) $\mu(f)$ that is independent of $w\not\in\cS(f)$ (see e.g.~\S\ref{sub:int:main-res}). In the notation of Theorem~\ref{th:Bernstein}, this shows that $\mu(f) = I^*(f-w)$. Finally,  Theorem~\ref{th:Bernstein} shows that for any $w\in W$, it holds that $ I^*(f-w)= V(\Delta_1,\ldots,\Delta_n)$.
\end{proof}

\subsection{The Jelonek set and face-resultants}\label{sub:non-prop_resultants} We finish this section by proving that the Jelonek set of a map is the union of its face-resultants. \\

\begin{notation}\label{not:dicritical_set}
For any tuple $\Pi$ of polytopes in $\R^n$, we use $\mathscr{F}_{\!D}(\Pi)$ to denote the set of all its dicritical semi-origin faces.
\end{notation}

\begin{theorem}\label{thm:algebraic_correspondence}
Let $A:=(A_1,\ldots,A_n)$ be a collection of finite subsets in $\N^n\setminus\{\unze\}$, and let $\Delta:=(\Delta_1,\ldots,\Delta_n)$ denote the tuple of polytopes in $\R^n$ defined as $\Delta_i := \conv(\{\unze\}\cup A_i)$ ($i=1,\ldots,n$). Then, for any $f\in K^A$ representing an almost face-generic polynomial map $K^n\to K^n$, it holds that 
\begin{equation}\label{eq:non-properness=crsg}
\cS(f)= \bigcup_{\Gamma \in\sF_{\! D}(\Delta)} \cR_\Gamma(f).
\end{equation} 
\end{theorem}

\begin{proof} We first show the inclusion 
\begin{equation}\label{eq:second-inclusion}
\cS(f)\subset \bigcup_{\substack{\Gamma\prec\Delta \text{ semi-origin}\\
\dim\crsg(f) = n-1}}\crsg(f)
\end{equation} Let $w\in \cS(f)$. Then, the preimage $f^{-1}(w)$ has less than $\mu(f)$ isolated points in $K^n$ counted with multiplicity. Lemma~\ref{lem:preimages} shows that the number of these preimages is less than $\mu(f)=V(\Delta)$. Then, Theorem~\ref{th:Bernstein} shows that $(f-w)_\Gamma=\unze$ has a solution in $\bT$ for some face $\Gamma$ of $\Delta$. This implies that $w\in\crsg(f)$. Lemma~\ref{lem:proj-codim>1} shows that $\Gamma$ is semi-origin. Then, the set $\cS(f)$ is included in the union of all face-resultants corresponding to semi-origin faces of $\Delta$. Those with dimension less than $n-1$ can be omitted as $\cS(f)$ has pure dimension (c.f.~\cite{Jel93}). This yields~\eqref{eq:second-inclusion}.

Now, let $\Gamma\prec\Delta$ be a semi-origin face. We distinguish whether or not $\crsg(f)$ contributes to $\cS(f)$. The rest of the proof is divided into two parts; in \textbf{Part 1)}, we prove the claim
\begin{equation}\label{eq:statement1}
\begin{matrix}
\text{\textit{If $\Gamma$ is dicritical, there is a Zariski open subset $Z\subset\mathcal{R}_\Gamma(f)$}}\\
\text{\textit{  satisfying $Z\subset\cS(f)$.}}
\end{matrix}
\end{equation} Then, in \textbf{Part 2)}, we prove the claim
\begin{equation}\label{eq:statement2}
\begin{matrix}
\text{\textit{If $\Gamma$ is not dicritical and satisfies $\dim\crsg(f) = n-1$,}}\\
\text{\textit{ there is a Zariski open subset $Z\subset\crsg(f)$ satisfying $Z\cap\cS(f)=\emptyset$.}}
\end{matrix}
\end{equation} Since $\cS(f)$ is Zariski closed, the proof then follows from Claims~\eqref{eq:statement1} and~\eqref{eq:statement2}, applied to relation~\eqref{eq:second-inclusion}.

\textbf{Part 1):} 

Assume that $\Gamma$ is dicritical. Then, thanks to Lemma~\ref{lem:face-inclusion_propertuy}, we assume that 
\begin{equation}\label{eq:face-not_included}
\sum\Gamma\not\subset H_1^{\R}\cup\cdots\cup H_n^{\R}.
\end{equation} Note that for any $i\in[n]$, we have $f(H^K_i)=\cR_{\Gamma'}(f)$ for some $\Gamma'\prec\Delta$ satisfying $\sum\Gamma'\subset H^\R_i$. Thus, thanks to~\eqref{eq:face-not_included}, similarly to the proof of Proposition~\ref{prp:face-generic-open}~\ref{it:vg_not_vgprime}, one can check that there is a Zariski open subset $Z\subset\crsg(f)$ that satisfies $Z\cap f(H^K_1\cup\cdots \cup H^K_n)=\emptyset$. Therefore, for any $w\in Z$, all solutions to the system $f-w=\unze$ are located inside $\bT$. Theorem~\ref{th:Bernstein} and Lemma~\ref{lem:preimages} thus show that their number, counted with multiplicities, is less than $\mu(f)$. Then, Claim~\eqref{eq:statement1} follows from Remark~\ref{rem:top-degree-non-prop}.

\textbf{Part 2):} Let $\Kgf$ denote the set of points $w\in\crsg(f)$ outside all other face-resultants in $K^n$, and such that the preimage $f^{-1}(w)$ has only isolated points. That is, we have 
\begin{align*}
\Kgf&:= \crsg(f) \setminus\big( \bigcup_{\substack{\Gamma'\prec\Delta\\ 
\Gamma'\neq\Gamma}}\cR_{\Gamma'}(f)\cup \Pos(f)~\big),
\end{align*} where $\Pos(f):=\{w\in K^n:\dim f^{-1}(w)>0\}$. Since  $f$ is dominant, we have $\dim \Pos(f)\leq n-2$. Then, since $\dim \crsg(f)$ is assumed to be $n-1$, Lemma~\ref{lem:proj-codim>1} shows that $\dim \Kgf= n-1$. 

In what follows, we show that $\Kgf$ is the set $Z$ from Claim~\eqref{eq:statement2}. Since $\Gamma$ is semi-origin and not dicritical by assumption, we have $\sum\Gamma= \sum\Delta\cap H^{\R}_i$ for some $i\in[n]$. Lemma~\ref{lem:resultant_disjoint_from_coordinate} shows that $\crsg(f)=f(H^{K}_i)$. Then, for any $w\in\Kgf$, the system $f-w = \unze$ has isolated solutions in $K^n$, and these are distributed among $\bT$ and $H^{K}_{i}$. As $\VKa((f-w)_{\Gamma'})=\emptyset$ for every $\Gamma'$ different than $\Gamma$,~\cite[Section 5.5, pp. 121]{Ful93} (see also~\cite{rojas1999toric}) shows that the total number of solutions in $K^n$ is equal to the mixed volume of $\Delta$. This in turn implies that $w\not\in \cS(f)$, and thus Claim~\eqref{eq:statement2} follows for $Z:=\Kgf$.
\end{proof}

We recall that a set $S\subset K^n$ is \emph{$K$-uniruled} if for any  $w\in S$, there exists a polynomial map $\varphi:K\to S$ with $\varphi(0) = w$, and $\varphi(K) \subset S$.\\
\begin{corollary}\label{cor:k-uniruled}
The Jelonek set of any almost face-generic polynomial map $f: K^n\to K^n$ is $K$-uniruled.
\end{corollary}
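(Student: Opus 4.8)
The plan is to deduce $K$-uniruledness directly from the description of $\cS(f)$ as a union of face-resultants obtained in Proposition~\ref{prop:algebraic_correspondence}. Since $\cS(f)=\bigcup_{\Gamma\in\FDinf}\cR_\Gamma(f)$ and a finite union of $K$-uniruled sets is $K$-uniruled (after checking that through any point we may take a rational curve lying in a single piece containing it), it suffices to prove that each face-resultant $\cR_\Gamma(f)$ attached to a dicritical face $\Gamma\prec\Delta^0$ is $K$-uniruled. So fix a dicritical $\Gamma$ and a point $w\in\cR_\Gamma(f)$; I want to produce a polynomial map $\varphi:\K\to\cR_\Gamma(f)$ with $\varphi(0)=w$.

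The key geometric input is the quasi-homogeneity already exploited in the proof of Lemma~\ref{lem:proj-codim>1}: because $\Gamma$ is a face of $\Delta^0$, after multiplying each $f_i$ by a suitable monomial, the system $(f-w)_\Gamma=\unze$ consists of polynomials that are quasi-homogeneous in $z$ with respect to the primitive inner normal $\alpha=(\alpha_1,\ldots,\alpha_n)$ of $\Gamma$. Concretely, there are integers $d_1,\ldots,d_n$ so that $f_{i\Gamma}(t^{\alpha}\cdot z)=t^{d_i}f_{i\Gamma}(z)$ for all $t$, where $t^\alpha\cdot z:=(t^{\alpha_1}z_1,\ldots,t^{\alpha_n}z_n)$. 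Hence if $z\in\bT$ is a point of $X_\Gamma$ over $w$ (i.e. $(f(z)-w)_\Gamma=\unze$), then the whole orbit $\{t^\alpha\cdot z : t\in\K\setminus 0\}$ maps, under $z\mapsto$ the corresponding value, into $\cR_\Gamma(f)$, with the $i$-th coordinate of the image scaling by $t^{d_i}$. The point is to turn this scaling curve in $\bT$ into a genuine polynomial (not just Laurent) map $\K\to\cR_\Gamma(f)$ reaching $w$ at the parameter value $0$.

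Here is where dicriticality enters decisively: $\alpha$ has at least one positive coordinate, say $\alpha_{i_0}>0$, and $\Gamma$ is dicritical so no $\Gamma_i$ is $\{\unze\}$, which forces the relevant exponents $d_i$ to be non-negative (this is exactly the manipulation that makes the first statement of Lemma~\ref{lem:proj-codim>1} and Lemma~\ref{lem:resultant_disjoint_from_coordinate} work). Thus the parametrized curve $\tau\mapsto$ (the $w'\in\bT$ with coordinates $w_i'=\tau^{d_i}w_i$, after re-scaling so that $\varphi$ is defined and polynomial at $\tau=0$) extends across $\tau=0$ with $\varphi(0)$ lying in $\overline{\cR_\Gamma(f)}=\cR_\Gamma(f)$; more precisely, for $w$ in the dense open part of $\cR_\Gamma(f)$ one gets $\varphi(\K)\subset\cR_\Gamma(f)$ directly, and for $w$ on the boundary one repeats the argument inside the face-resultant of a smaller face, using that $\cR_\Gamma(f)$ is itself (the closure of) an image of a torus orbit-closure and is therefore stratified into such pieces. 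I would organize this by first handling generic $w$, then observing the construction is compatible with passing to faces of $\Gamma$, so an easy descending induction on $\dim\Sigma\Gamma$ covers all of $\cR_\Gamma(f)$.

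The main obstacle is bookkeeping the monomial normalization so that the scaling curve genuinely becomes a \emph{polynomial} map $\K\to\TKn$ with value $w$ at $0$ (rather than a Laurent map, or one whose limit escapes $\bT$), and verifying that the limit point at $\tau=0$ indeed still lies in $\cR_\Gamma(f)$ and not merely in its closure in a toric compactification — this is the step where almost face-genericity (smoothness and the dimension count of Definition~\ref{def:face-generic}) and the dicriticality of $\Gamma$ must be combined carefully, exactly as in Lemmas~\ref{lem:proj-codim>1} and~\ref{lem:resultant_disjoint_from_coordinate}. Everything else — the finiteness of $\FDinf$, the reduction to a single $\Gamma$, and the quasi-homogeneity — is routine given the results already established.
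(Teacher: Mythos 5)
Your reduction to showing that each face-resultant $\cR_\Gamma(f)$, $\Gamma$ dicritical, is $K$-uniruled is exactly the paper's first step. But the mechanism you then propose — flowing along the one-parameter torus orbit $z\mapsto t^{\alpha}\cdot z$ attached to the normal $\alpha$ of $\Gamma$ and pushing the induced scaling $w_i\mapsto \tau^{d_i}w_i$ into $\cR_\Gamma(f)$ — breaks down on a substantial part of the statement. The weights satisfy $d_i=\langle\alpha,a\rangle$ for $a\in\Gamma_i$, and since $\unze\in\Delta_i^0$ one has $d_i\geq 0$ with $d_i=0$ exactly when $\unze\in\Gamma_i$ (non-negativity comes from this, not from dicriticality). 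Consequently, for an \emph{origin} dicritical face (every $\Gamma_i$ contains $\unze$ but none equals $\{\unze\}$ — these do occur in $\FDinf$) all $d_i$ vanish, the orbit of $z$ is collapsed by $f_\Gamma$ to the single point $w$, and your curve is constant: no uniruling is produced. Even for strictly pre-origin faces the curve $\tau\mapsto(\tau^{d_1}w_1,\ldots,\tau^{d_n}w_n)$ does not satisfy $\varphi(0)=w$; its value at $\tau=0$ has zero coordinates wherever $d_i>0$, so it leaves $\TKn$ and limits to a point that is not $w$. What you call "bookkeeping the monomial normalization" is therefore not a normalization issue but the absence of a construction.

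The paper's proof avoids quasi-homogeneity altogether and treats the two cases by elementary observations. If $\Gamma$ is strictly pre-origin, then for every $i$ with $\unze\notin\Gamma_i$ the variable $w_i$ simply does not occur in the system $(f-w)_\Gamma=\unze$, so $\cR_\Gamma(f)$ is the intersection of $\TKn$ with a cylinder $K^r\times X$ and is swept by coordinate lines in the free directions. If $\Gamma$ is an origin face, then $\cR_\Gamma(f)=f_\Gamma(\TKn)$ is the image of a polynomial map, and one obtains the ruling through $w$ by choosing a preimage $p$ and pushing forward an affine line through $p$ under $f_\Gamma$. To repair your argument you would need to replace the torus-orbit curve by something of this second kind in the origin case, and by a translation (not a scaling) in the free $w_i$-directions in the strictly pre-origin case — at which point you have reconstructed the paper's proof.
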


\begin{proof}
We retain the notation for $f$ of Theorem~\ref{thm:algebraic_correspondence}. Thanks to~\eqref{eq:non-properness=crsg}, it is enough to show $K$-uniruledness of $\cR_\Gamma(f)$ for each semi-origin face $\Gamma$ of $\Delta$. If $\Gamma$ is not origin, then $\cR_\Gamma(f)$ is the intersection of $\TKn$ with a set of the form $K^{r}\times X$, where $X$ is an algebraic subset of $K^{n-r}$ (see Definition~\ref{def:r-star-gamma}). Hence, the sets $\cR_\Gamma(f)$ are $K$-uniruled whenever $\Gamma$ is not origin.

Assume now that $\Gamma$ is an origin-face of $\Delta$. Then, Definition~\ref{def:r-star-gamma} shows that $\cR_\Gamma(f)=f_\Gamma(\TKn)$. For every  $w\in\cR_\Gamma(f)$, consider the line $L$ given by the parametrization $z_i(t) = t+p_i$ ($i=1,\ldots,n$), where $f_\Gamma(p_1,\ldots,p_n) = w$. Then, the polynomial map $\varphi:=f_\Gamma(z_1(\cdot),\ldots,z_n(\cdot)):K\to\cS(f)$ is well-defined and satisfies $\varphi(0) = w$. 
\end{proof}

\section{Preliminaries on tropical geometry}\label{sec:prel} We state in this section some well-known facts about tropical geometry (see e.g.~\cite{BIMS15,richter2005first,MS15}, and the references therein). Some of the exposition and notations here are taken from~\cite{Br-deMe11,BB13,EH18}. We will introduce in \S\ref{sub:base-field} some tropical objects such as tropical hypersurfaces, and illustrate their relation to their classical analogues. A key result in this section is Theorem~\ref{thm:mixed_subd} which relates the common intersection of tropical hypersurfaces to decompositions of the Newton polytopes and their faces.
\subsection{The base field}\label{sub:base-field} Let $\Bbbk$ be an algebraically closed field of characteristic zero. A \emph{locally convergent generalized Puiseux series} is a formal series of the form 
\[
c(t)=\underset{r\in R}{\sum} c_rt^r,
\] where $R\subset\mathbb{R}$ is a well-ordered set that is bounded from below, for which all $c_r\in\Bbbk$. We denote by $\K$ the set of all locally convergent generalized Puiseux series. It is an algebraically-closed field of characteristic 0~\cite{markwig2009field}. The field $\K$ can be equipped with the function 
\[
  \begin{array}{lccc}
    \displaystyle \val: & \mathbb{K} & \longrightarrow & \mathbb{R}\cup\{-\infty\} \\
    
    \displaystyle \ & 0 & \longmapsto  & -\infty \\
    
     \displaystyle \ & \underset{r\in R}{\sum} c_rt^r\neq 0 & \longmapsto  & -\min_R\{r\ |\ c_r\neq 0\}.
  \end{array}
\] 
We will call $\val$ the \emph{valuation function} or simply, \emph{valuation}. This extends to a map $\Val:\K^n\rightarrow(\mathbb{R}\cup\{-\infty\})^n$ by evaluating $\val$ coordinate-wise, i.e. $\Val(z_1,\ldots , z_n)=(\val(z_1),\ldots , \val(z_n))$. 

\begin{remark}
In the standard literature (c.f.~\cite{efrat2008valuations}), the valuation of an element in $\K$ is defined as $-\val$. Our choice for it to be the opposite was motivated by some of the conventions in tropical geometry. 
\end{remark} 

Given a polynomial $f\in\K[z_1,\ldots,z_n]$, its \emph{tropical hypersurface}, denoted by $\VT(f)$, is the subset in $\R^n$ defined as $\Val(\VKb (f))$. Tropical hypersurfaces can be determined combinatorially.
Consider the function
\begin{equation}\label{eq:nuf}
  \begin{array}{lccc}
    \displaystyle \nu_f: & \mathbb{Z}^n & \longrightarrow & \mathbb{R}\cup\{-\infty\} \\
    
    \displaystyle \ & a & \longmapsto  &  \begin{cases}
      \val(c_s), & \text{if $ s\in S$,}  \\
      -\infty, & \text{otherwise.}
    \end{cases} \\
    
  \end{array}
\end{equation}

Its \emph{Legendre transform} is a piecewise-linear convex function $ \mathcal{L}(\nu_f): \R^n\to\R$
\[ 
x\mapsto \max_{s\in S}\{\langle x, s\rangle + \nu_f(s)\},
\] where $\langle ~,~\rangle:~\R^n\times\R^n\rightarrow\R$ is the standard Eucledian product. The set of points $p\in\R^n$ at which $\mathcal{L}(\nu_f)$ is not differentiable is called the \emph{corner locus} of $\mathcal{L}(\nu_f)$. We have the famous fundamental theorem of tropical geometry~\cite{MKL06},~\cite[Theorem 3.13]{MS15} that was first announced by Kapranov.\\

\begin{theorem}[Kapranov]\label{th:Kapranov}
The tropical hypersurface $\VT (f)$ of a polynomial $f$ defined over $\K$ is the corner locus of its Legendre transform $\mathcal{L}(\nu_f)$.
\end{theorem}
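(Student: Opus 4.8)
The plan is to prove Kapranov's theorem by establishing the two inclusions between $\VT(f) = \Val(\VKb(f))$ and the corner locus of $\mathcal{L}(\nu_f)$, working one point at a time.

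\textbf{Setup and reduction.} First I would fix $f = \sum_{a\in A} c_a z^a$ and a point $p \in \R^n$. Substituting a point $z \in \bT$ with $\Val(z) = p$ into $f$, each monomial $c_a z^a$ has valuation $\val(c_a) + \langle p, a\rangle = \nu_f(a) + \langle p, a\rangle$; this is exactly the $a$-th term of $\mathcal{L}(\nu_f)(p)$. So the key observation is the comparison between the valuation of a sum and the maximum of the valuations of its summands: $\val\big(\sum_a c_a z^a\big) \leq \max_a\big(\nu_f(a) + \langle p, a\rangle\big) = \mathcal{L}(\nu_f)(p)$, with equality whenever the maximum on the right is attained by a \emph{unique} index $a$. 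The whole proof turns on controlling when the leading terms cancel.

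\textbf{Inclusion $\VT(f)\subseteq$ corner locus.} Suppose $p$ is \emph{not} in the corner locus, i.e. $\mathcal{L}(\nu_f)$ is differentiable at $p$; then there is a unique $a_0 \in A$ achieving $\max_a(\nu_f(a) + \langle p, a\rangle)$, and this remains true on a neighborhood of $p$. For any $z \in \bT$ with $\Val(z)$ near $p$, the term $c_{a_0} z^{a_0}$ strictly dominates all others in valuation, so $\val(f(z)) = \nu_f(a_0) + \langle \Val(z), a_0\rangle \neq -\infty$, hence $f(z)\neq 0$. Therefore no point of $\VKb(f)$ tropicalizes to a neighborhood of $p$, so $p \notin \Val(\VKb(f))$. (One subtlety: $\Val$ is not continuous, so I would phrase this as: the set $\{z : \val(f(z)) = -\infty\}$ has image disjoint from a neighborhood of $p$, which suffices.)

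\textbf{Inclusion corner locus $\subseteq \VT(f)$.} This is the harder direction and the main obstacle, since it requires actually producing a Puiseux-series solution. Suppose $p$ lies in the corner locus, so at least two indices, say $a_1, a_2 \in A$, tie for the maximum $M := \mathcal{L}(\nu_f)(p)$. I would first reduce to the case $p \in \Q^n$ by a density/limit argument, or handle general $p$ directly. The standard approach: after a monomial change of coordinates (unimodular for rational $p$, or using the generalized Puiseux exponents for real $p$) we may assume $p = (v, 0, \ldots, 0)$ or even reduce to the one-variable case by restricting $f$ to a generic line through a point over $p$ — concretely, specialize all but one variable to generic constants of appropriate valuation so that the "initial form" $\mathrm{in}_p(f)$ (the sum of the dominant terms, a polynomial over $\C$ with at least two monomials) remains nonconstant. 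Then $\mathrm{in}_p(f)$ has a root $\zeta \in \C^* $ by algebraic closedness of $\C$, and one builds a solution $z(t) = \zeta t^{-v} + (\text{higher order})$ by Hensel-type / Newton-polygon successive approximation, using that $\K$ is algebraically closed and the series converge locally. The point $z$ so constructed lies in $\VKb(f)$ with $\Val(z) = p$, giving $p \in \VT(f)$. I would cite the algebraic closedness and completeness properties of $\K$ from \cite{markwig2009field} to justify convergence, and perhaps invoke \cite{MS15}~or~\cite{MKL06} for the initial-degeneration formalism ($\mathrm{in}_p$ and lifting of roots) rather than redoing it. The genuine work — and the step most likely to need care — is the lifting lemma: every root of the initial form extends to a root of $f$ with the prescribed valuation, which is where the structure of the value group $\R$ and the Newton-polygon argument enter.
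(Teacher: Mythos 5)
The paper does not prove this theorem at all: it is quoted as a known result (Kapranov's theorem / the fundamental theorem of tropical geometry) with references to \cite{MKL06} and \cite[Theorem 3.13]{MS15}, so there is no in-paper argument to compare yours against. On its own terms, your outline is the standard textbook proof and is structurally sound. The easy inclusion is complete as written: a unique maximizer $a_0$ forces $\val(f(z))=\nu_f(a_0)+\langle\Val(z),a_0\rangle\neq-\infty$ by the ultrametric inequality, and since the value group of $\K$ is all of $\R$ you do not even need the neighborhood caveat — it suffices that no $z$ with $\Val(z)=p$ exactly can be a zero. For the hard inclusion you correctly isolate the entire content of the theorem in the lifting lemma (every root of $\mathrm{in}_p(f)$ in the residue torus lifts to a root of $f$ with $\Val(z)=p$); note also that because $\K$ has real exponents, general $p\in\R^n$ is handled directly and no density argument over $\Q^n$ is needed.

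The one place your sketch is genuinely thin is that lifting step. If you defer it to \cite{MS15}, your "proof" collapses to citing the theorem you are proving, since the lifting lemma \emph{is} Kapranov's theorem up to the easy direction. If you intend to prove it, the reduction "specialize all but one variable to generic constants" needs care: you must choose the specialization so that the resulting one-variable polynomial over $\K$ still has two monomials of its Newton polygon tying at the slope corresponding to the remaining coordinate of $p$, and so that the coefficient of the relevant initial term does not vanish after specialization (genericity of the constants handles this, but it is exactly the point that must be argued). After that, the one-variable Newton--Puiseux iteration does produce a root with the prescribed valuation, using algebraic closedness of $\C$ at each stage and the local convergence / well-orderedness of supports in $\K$ to ensure the limit lies in $\K$. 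With that step written out, the proof is complete.
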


\subsection{Subdivisions from tropical polynomials}\label{subs:tropical_polynomials} For any two values $a,b\in\mathbb{T}:=\R\cup\{-\infty\}$, their \emph{tropical summation} $a\oplus b$ is defined as their maximum $\max (a,b)$, and their \emph{tropical multiplication} $a \otimes b$ is their usual sum $a+b$. This gives rise to a \emph{tropical semi-field} $(\mathbb{T},\oplus ,\otimes)$, where $\max(a,-\infty) = a$, and $ -\infty + a = -\infty$. A \emph{tropical polynomial} $F$ is defined over the semifield $(\mathbb{T},\oplus ,\otimes)$, which gives rise to a function 
\[
  \begin{array}{lccc}
    F: & \mathbb{T}^n & \longrightarrow & \mathbb{T} \\
    
 	 & x & \longmapsto  &\displaystyle \max_{s\in S}\{\langle x, s\rangle + \gamma_s\}, \\
    
  \end{array}
\] where $S$ is a finite set containing all $s\in\N^n$ for which $\gamma_s\in\R$. The set $S$ is called the \emph{support} of the tropical polynomial $F$, and the linear terms appearing in $F$ are called the \emph{tropical monomials}. 
The \emph{tropicalization} of the polynomial $f$ above is the tropical polynomial 
\[
f^{\trop}(x):=\max_{s\in S}\{\langle x, s\rangle +\val(c_s)\}.
\] This coincides with the piecewise-linear convex function $\mathcal{L}(\nu_f)$ defined above, and thus Theorem~\ref{th:Kapranov} asserts that $\VT(f)$ is the corner locus of $f^{\trop}$. Conversely, the corner locus of any tropical polynomial is a tropical hypersurface.

\subsubsection{Regular polyhedral cell-decompositions}\label{subsub:subdivisions} All polytopes in this paper are assumed to be convex.\\

\begin{definition}\label{def:polyh-subdiv}
Let $\Pi$ be a polytope in $\R^n$. A \emph{polyhedral subdivision} of $\Pi$ is a set of polytopes $\{\pi_i\}_{i\in I}$ satisfying $\cup_{i\in I}\pi_i=\Pi$, and if $i,j\in I$, then $\pi_i\cap\pi_j$ is either empty or it is a common face of $\pi_i$ and $\pi_j$. A polyhedral subdivision $\tau$ of $\Pi$ is called \emph{regular} if there exists a continuous, convex, piecewise-linear function $\varphi:\Pi\rightarrow \R$ such that the polytopes of $\tau$ are exactly the domains of linearity of $\varphi$.
\end{definition}

 Let $\Pi$ be an integer polytope in $\R^n$ and let $\varphi:~\Pi~\cap~\mathbb{Z}^n\rightarrow \R$ be a function. We denote by $\hat{\Pi}(\varphi)$ the convex hull of the set 
\[
\{(s,~\varphi(s))\in\R^{n+1}~|~s\in\Pi\cap\mathbb{Z}^n\}.
\] Then the polyhedral subdivision of $\Pi$ induced by projecting the union of the lower faces of $\hat{\Pi}(\varphi)$ onto the first $n$ coordinates, is regular. \\

\begin{example}\label{ex:subdivision}
The blue part of the second polytope, $\Delta_2$, appearing in Figure~\ref{fig:three_polytopes} is also a polytope. It admits a regular subdivision induced by the function $\varphi$ satisfying $\varphi(1,1,0) = \varphi(1,1,2) = 0$, $\varphi(0,1,2) = 7$, $\varphi(0,2,4) = 3$, $\varphi(1,2,4) = 2$, and $\varphi(2,2,4) = 5$.
\end{example}

\subsubsection{Subdivisions and their duals}\label{subsub:subdivisions2}
Keeping with the same notation as above, the tropical hypersurface $\VT(f)$ is an $(n-1)$-dimensional piecewise-linear polyhedral complex which produces a \emph{polyhedral cell-decomposition} $\Xi$ of $\R^n$. 
This is a finite collection of the relative interiors of polyhedra in $\R^n$, whose closures satisfy Definition~\ref{def:polyh-subdiv}.
An element of $\Xi$ is called \emph{cell}. The $n$-dimensional cells of $\Xi$, are the connected components of the complement of $\VT(f)$ in $\R^n$. 
All together, cells of dimension less than $n$ form the domains of linearity of $f^{\trop}$ at $\VT(f)$. For each such cell $\xi\in\Xi$, the corresponding domain in $\R^n$ is denoted by $|\xi|$; this is the set of points $x\in\R^n$ forming the relatively open polyhedron corresponding to $\xi$. In what follows, we omit the notation $|\cdot|$ whenever it is clear from the context.

The cell-decomposition $\Xi$ induces a regular subdivision $\tau$ of the Newton polytope $\Pi$ of $f$ in the following way (see also~\cite[Section 3]{BB13}). Given a cell $\xi$ of $\VT(f)$ and $x\in\xi$, the set 
\[
\mathcal{I}_{\xi}:=\{s\in S~|~f^{\trop}(x)=\langle x,s\rangle + \val(c_s)\}
\] does not depend on $x$. All together the polyhedra $\delta$, defined as the convex hull of $\mathcal{I}_{\xi}$ form a subdivision $\tau$ of $\Pi$ called the \emph{dual subdivision}, and the polyhedron $\delta$ is called the \emph{dual} of $\xi$. The subdivision $\tau$ is regular as the corresponding function $\varphi$ in Definition~\ref{def:polyh-subdiv} is the same as $\nu_f$ in~\eqref{eq:nuf} (c.f. Example~\ref{ex:subdivision}).

An analogous description holds true for any tuple $f$ of polynomials
 $f_1,\ldots,f_k\in\mathbb{K}[z_1,\ldots,z_n]$. Let $S_1,\ldots,S_k\subset\mathbb{N}^n$, $\Pi_1,\ldots,\Pi_k\subset\R^n$, and $T_1,\ldots,T_k\subset\R^n$ be their respective supports, Newton polytopes, and tropical hypersurfaces respectively. The union of these latter defines a polyhedral cell-decomposition $\Xi$ of $\R^n$. Any non-empty cell of $\Xi$ can be written as 
\[
\xi=\xi_1\cap\cdots\cap \xi_k
\] with $\xi_i\in\Xi_i$ ($i=1,\ldots,k$), where $\Xi_i$ is the polyhedral cell-decomposition of $\R^n$ produced by $T_i$. Any cell $\xi\in\Xi$ can be uniquely written in this way. Similarly, the polyhedral cell-decomposition induces a \emph{mixed dual subdivision} $\tau$  of the Minkowskii sum $\Pi:=\Pi_1 +\cdots+\Pi_k$ in the following way. Any polytope $\delta\in\tau$ is equipped with a unique representation $\delta:=\delta_1+\cdots+\delta_k$ with $\delta_i\in\tau_i$ ($i=1,\ldots,k$), where each $\tau_i$ is the dual subdivision of $\Pi_i$. The above duality-correspondence applied to the (tropical) product of the tropical polynomials gives rise to the following well-known fact (see e.g.~\cite[\S3~\&~4]{BB13}).\\

\begin{theorem}\label{thm:mixed_subd}
There is a one-to-one duality correspondence between $\Xi$ and $\tau$, which reverses the inclusion relations, and such that if $\delta\in\tau$ corresponds to $\xi\in\Xi$, then

\begin{enumerate}[topsep=0mm,itemsep=0mm,label=(\arabic*),ref=(\arabic*)]

 \item\label{it:inters=sum} $\xi=\xi_1\cap\cdots\cap\xi_k$ with $\xi_i\in\Xi_i$ for $i=1,\ldots,n$, then  $\delta = \delta_1 +\cdots+ \delta_k$ with $\delta_i\in\tau_i$ is the polytope dual to $\xi_i$.
 
 \item\label{it:dimens-compl} $\dim\xi + \dim\delta = n$,
 
 \item\label{it:orthogonality} the cell $\xi$ and the polytope $\delta$ span orthonogonal real affine spaces,
 
 \item\label{it:unboundedness} $\delta$ lies on a proper face $F$ of $\Pi$ if and only if the cell $\xi$ is unbounded and contains a half-line spanned by an outward supporting vector to $F$.
\end{enumerate}
\end{theorem} 

\begin{proof}
Item~\ref{it:inters=sum} is a consequence of~\cite[Proposition 4.1]{BB13}. Items~\ref{it:dimens-compl}, and~\ref{it:orthogonality} are obtained by applying item (1) and (2) of~\cite[\S3]{BB13} to the tropical hypersurface $T_1\cup\cdots\cup T_k$. Item~\ref{it:unboundedness}, is similarly obtained from Item (3) of~\cite[\S3]{BB13}, and diagram (3.1) in \textit{loc. cit.} .
\end{proof} We say that the above polyhedral cell-decomposition $\Xi$ is \emph{induced by the tuple $f$}.

\begin{figure}

\centering
\input{fig-subdivision}

\caption{To the left: The subdivisions $\tau_1$, $\tau_2$, and $\tau$ of $\Delta_1$, $\Delta_2$, and $\Delta_1+\Delta_2$ respectively. To the right: the corresponding dual cell-decomposition of $\R^2$. All obtained from Example~\ref{ex:virtual_preimage}}\label{fig:subdivision}
\end{figure}
\subsubsection{Transversal cell-decompositions}\label{subsub:proper} In what follows, we introduce a family of polynomial tuples whose tropical hypersurfaces have transversal intersections.\\

\begin{definition}\label{def:Transv} The cell $\xi\in\Xi$ is \emph{transversal} if $\dim(\delta)=\dim(\delta_1 )+\cdots +\dim(\delta_k)$. A polyhedral cell-decomposition is \emph{transversal} if so are all of its cells.
\end{definition}

\begin{notation}
For any $\xi\in\Xi$ as in Theorem~\ref{thm:mixed_subd}, we will use $\delta(\xi)$ to denote the polytope $\delta$, dual to $\xi$ and we use $\delta(\xi_i)$ ($i=1,\ldots,n$) to denote the polytope $\delta_i$, dual to $\xi_i$.
\end{notation}
Applying~\cite[Theorem 1.1]{OP13} recursively on transversal intersections of tropical hypersurfaces in $\R^n$, we obtain the following generalized version of Theorem~\ref{th:Kapranov} (see also~\cite[~Ch. 4,~\S6,~Theorem 4.6.18]{MS15}).\\
\begin{theorem}\label{thm:Kap-general}
Let $f$ be a tuple of polynomials $f_1,\ldots,f_k\in\K[z_1,\ldots,z_n]$, and assume that $f$ induces a transversal polyhedral cell-decomposition of $\R^n$. Then, it holds that
\begin{equation}\label{eq:kap:gen-1}
\Val (\VKb(f))= \VT(f_1)\cap\cdots\cap \VT(f_k).
\end{equation} 
\end{theorem}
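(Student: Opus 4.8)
The plan is to prove the non-trivial inclusion $\VT(f_1)\cap\cdots\cap\VT(f_k)\subseteq\Val(\VKb(f))$, the reverse inclusion being immediate: any common zero $z\in\VKb(f)$ lies in each $\VKb(f_i)$, so $\Val(z)\in\Val(\VKb(f_i))=\VT(f_i)$ for every $i$. I would induct on $k$. The case $k=1$ is exactly Kapranov's Theorem~\ref{th:Kapranov}. For the inductive step, set $g:=(f_1,\ldots,f_{k-1})$, $X:=\VKb(g)\subset\bT$ and $Y:=\VKb(f_k)\subset\bT$, so that $\VKb(f)=X\cap Y$, and apply the lifting theorem for tropical intersections~\cite[Theorem 1.1]{OP13}: at any point $v\in\Val(X)\cap\Val(Y)$ where $\Val(X)$ and $\Val(Y)$ meet \emph{tropically transversally}, one has $v\in\Val(X\cap Y)$.

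To run the induction I would first check that the cell-decomposition $\Xi(g)$ of $\R^n$ induced by $g$ is again transversal: a cell of $\Xi(g)$ has the form $\xi_1\cap\cdots\cap\xi_{k-1}$ with $\xi_i\in\Xi_i$, and intersecting it with the cell $\xi_k\in\Xi_k$ through a relative-interior point refines it to a cell of $\Xi$, so the codimension-additivity imposed by Definition~\ref{def:Transv} on the cells of $\Xi$, read through Proposition~\ref{prop:mixed_subd}\ref{it:dimens-compl}, descends to $(k-1)$-fold intersections. The inductive hypothesis then gives $\Val(X)=\VT(f_1)\cap\cdots\cap\VT(f_{k-1})$, which by the structure theorem for tropical varieties (see e.g.~\cite{MS15}) is pure of dimension $\dim X=n-(k-1)$, so $X$ is a complete intersection of the expected dimension. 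Now fix $v\in\bigcap_{i=1}^{k}\VT(f_i)$ and let $\xi=\xi_1\cap\cdots\cap\xi_k$ be the cell of $\Xi$ through $v$. Near $v$, the local fan of $\Val(X)=\VT(f_1)\cap\cdots\cap\VT(f_{k-1})$ is, by Proposition~\ref{prop:mixed_subd}\ref{it:dimens-compl} and~\ref{it:orthogonality}, the intersection of the local fans of $\VT(f_1),\ldots,\VT(f_{k-1})$ at $v$ and has the expected codimension $k-1$, while the local fan of $\Val(Y)$ at $v$ is that of $\VT(f_k)$; transversality of $\Xi$ at $\xi$ is exactly the statement that these two fans meet in codimension $k$, i.e.\ tropically transversally. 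Applying~\cite[Theorem 1.1]{OP13} yields a point of $X\cap Y$ over $v$, closing the induction.

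I expect the bookkeeping in this last step to be the main obstacle. The lifting theorem needs $\Val(X)$ as a polyhedral set of the correct \emph{local} dimension at $v$ --- and, for its transversality hypothesis, its local fan structure --- whereas the inductive hypothesis only furnishes the set equality $\Val(X)=\bigcap_{i<k}\VT(f_i)$; one must combine this with purity of $\Val(X)$ to exclude both spurious cells of the naive intersection and lower-dimensional or embedded components of $X$ near $v$, and then match ``transversal cell of $\Xi$'' with the ``tropically transverse point'' condition of \emph{loc.\ cit.}. This matching is where the orthogonality and dimension-complementarity clauses of Proposition~\ref{prop:mixed_subd} carry the argument; the residual verifications (transversality of $\Xi(g)$, compatibility of the two notions of transversality) are routine fan-theoretic checks once this dictionary is set up.
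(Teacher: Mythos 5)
Your argument---induction on $k$ with Kapranov as the base case and the Osserman--Payne lifting theorem supplying the inductive step at tropically transverse points---is exactly the route the paper takes; its entire proof is the one-line remark ``Applying [OP13, Theorem 1.1] recursively on transversal intersections of tropical hypersurfaces in $\R^n$'' together with a pointer to [MS15, Theorem 4.6.18]. Your proposal actually records more detail (the easy inclusion, transversality of the intermediate cell-decomposition, and the dictionary between transversal cells of $\Xi$ and the transversality hypothesis of \emph{loc.\ cit.}) than the paper itself does.
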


\section{Correspondence for the Jelonek set}\label{sec:correspondence} We start with some definitions.
\begin{definition}[virtual preimage]\label{def:virtual}
Let $L:\R^n\to\R$ be a tropical polynomial function expressed as $x\mapsto\max_{s\in S}\{\langle x, s\rangle + \gamma_s\}$, where $S\subset\N^n\setminus\{\unze\}$ is the support of $L$. For any value $y\in \R\cup\{-\infty\}$, we use the notation $\cT_y(L)$ for the corner locus of the function
\[
x\mapsto\max_{s\in S}\{\langle x, s\rangle + \gamma_s,~y\}.
\] We use $\cT(L)$ as a shorthand for $\cT_{-\infty}(L)$. Similarly, if $L$ is a tropical polynomial map $(L_1,\ldots,L_n):\R^m\to\R^n$, then for any $y:=(y_1,\ldots,y_n)\in(\R\cup\{-\infty\})^n$ we use the notation $\cT_y(L)$ to refer to the intersection $\cT_{y_1}(L_1)\cap\cdots\cap\cT_{y_n}(L_n)$ in $\R^m$. We call $\cT_y(L)$ the \emph{virtual preimage of $y$} under $L$. We use $\cT(L)$ as a shorthand for $\cT_{(-\infty,\ldots,-\infty)}(L)$.
\end{definition}

\begin{example}\label{ex:virtual_preimage}
Assume that $F:\R^2\to\R^2$ is given by 
\begin{equation*}
(F_1,~F_2) = \big( \max (a-1,~2a+b + 2,~3a+2b- 5),~\max (a+b,~2a+2b,~a+2b)
\end{equation*}
 Then, the virtual preimage $\cT_{(-2,-1)}(F)$ is represented in Figure~\ref{fig:subdivision} with the orange line.
\end{example}

 Let $A$ be an $n$-tuple of subsets in $\N^n\setminus\{\unze\}$, and let $\Delta$ be the $n$-tuple of polytopes obtained from $A$ as in Equation~\eqref{eq:delta_0}.\\

\begin{definition}\label{def:tropical_degeneracy}
A polynomial tuple $f\in\K^A$ is called \emph{face-generic} if it is almost face-generic and if for any $\Gamma\prec\Delta$, and any $I\subset [n]$, the tropical intersection $\cap_{i\in I}\VT(f_{i\Gamma})$ is transversal.
\end{definition}

We have the following easy consequence of Proposition~\ref{prp:face-generic-open} and~\cite[Corollary 4.6.11]{MS15}.\\
\begin{lemma}\label{lem:tropical_degeneracy}
Face-generic tuples form a Zariski open subset of $\K^A$.
\end{lemma}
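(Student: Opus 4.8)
The plan is to add one requirement on top of Proposition~\ref{prp:face-generic-open}. By that proposition the almost face-generic tuples already form a Zariski open subset of $K[A]$, so it suffices to prove that the locus carved out by the extra transversality condition is Zariski open. Since $\Delta$ has only finitely many faces $\Gamma$, $[n]$ has only finitely many subsets $I$, and a finite intersection of Zariski open sets is Zariski open, it is enough to fix one pair $(\Gamma,I)$ and show that $\mathcal{U}_{\Gamma,I}:=\{f\in K[A]\mid \cap_{i\in I}\VT(f_{i\Gamma})\text{ is transversal}\}$ is Zariski open; intersecting these finitely many sets with the open set of Proposition~\ref{prp:face-generic-open} then finishes the proof.

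The core step is to translate non-transversality into a Zariski closed condition on the coefficients. By Definition~\ref{def:Transv} together with Proposition~\ref{prop:mixed_subd}, the cell decomposition induced by $(f_{i\Gamma})_{i\in I}$ is non-transversal precisely when it has a cell $\xi$ whose dual polytopes $\delta(\xi_i)$ --- the Newton polytopes of the initial forms of the $f_{i\Gamma}$ along $\xi$ --- satisfy $\dim\sum_{i\in I}\delta(\xi_i)<\sum_{i\in I}\dim\delta(\xi_i)$. There are only finitely many combinatorial patterns for such a collapse, each encoded by a tuple of subsets $S_i\subset\Gamma_i\cap A_i$ with $\dim\sum_{i}\conv(S_i)<\sum_{i}\dim\conv(S_i)$ which are simultaneously realizable as domains of linearity. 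For each pattern I would argue, using Theorem~\ref{thm:Kap-general} and upper semicontinuity of fiber dimension in the family over $K[A]$, that its occurrence forces the truncated square subsystem built from the faces of the $\conv(S_i)$ to have a solution set in $\bT$ of more than the expected codimension; running the resultant bookkeeping from the proof of Proposition~\ref{prp:face-generic-open} (via $\Res_{I\Gamma}$ and the codimension formula~\eqref{eq:resultants}) then confines $f$ to a proper algebraic subset of $K[A]$. Taking the finite union of these subsets over all patterns yields a Zariski closed complement of $\mathcal{U}_{\Gamma,I}$.

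The main obstacle is conceptual rather than computational: transversality of a regular mixed subdivision depends only on the \emph{valuations} of the coefficients $c_a$, and a locus of the shape ``$(\val(c_a))_a$ lies in a prescribed rational cone'' is in general neither Zariski open nor Zariski closed in $K[A]$. The point of the reduction above is to replace this valuation-theoretic condition by an honest algebraic one --- excess dimension of a resultant stratum of an initial-form system --- and the mechanism that legitimizes the replacement is almost face-genericity itself: by Definition~\ref{def:face-generic}~\ref{it:vgi_smooth} each $\VKb(f_{I\Gamma})$ is a smooth complete intersection of the expected dimension, so by Theorem~\ref{thm:Kap-general} the tropical intersection $\cap_{i\in I}\VT(f_{i\Gamma})$ equals $\Val(\VKb(f_{I\Gamma}))$ and cannot acquire a higher-dimensional ``phantom'' cell except over a proper algebraic subset of $K[A]$. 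Carefully matching the combinatorial collapse patterns with the resultant strata, and verifying that almost face-genericity indeed excludes the borderline configurations, is where the bulk of the work lies.
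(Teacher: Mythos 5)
Your reduction to finitely many pairs $(\Gamma,I)$ is fine, and your diagnosis of the real difficulty is exactly right: transversality depends only on the valuations $(\val(c_a))_a$, and loci cut out by conditions on valuations are in general neither Zariski open nor Zariski closed in $\K[A]$. The paper does not try to algebraize this condition at all; its proof works directly with the map $\bVal$, observing that non-transversality confines $\bVal(f)$ to a finite union of rational hyperplanes in $\R^m$ (one for each combinatorial type of degenerate incidence of affine pieces of the $\VT(f_{i\Gamma})$), and then concludes openness of the complement from there. So your route is genuinely different from the paper's, and unfortunately it is the different step that breaks.

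The gap is the central claimed implication that a non-transversal collapse pattern ``forces the truncated square subsystem \ldots to have a solution set in $\bT$ of more than the expected codimension.'' This is false, and the appeal to Theorem~\ref{thm:Kap-general} is circular: that theorem takes transversality as a hypothesis, and without it the tropical intersection $\cap_{i\in I}\VT(f_{i\Gamma})$ may strictly contain $\Val(\VKb(f_{I\Gamma}))$ with no consequence whatsoever for the dimension of the algebraic side. Concretely, take $n=2$, $I=\{1,2\}$ and restrictions $f_{1\Gamma}=z_1+z_2$, $f_{2\Gamma}=z_1+2z_2$: the two tropical lines coincide, so every ray of the common line is a non-transversal cell ($\dim\delta=1\neq 1+1$), yet $\VKb(f_{I\Gamma})=\emptyset$ and no resultant stratum of excess codimension is met. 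More globally, any tuple with complex (valuation-zero) coefficients has each $\VT(f_{i\Gamma})$ equal to a translated codimension-one skeleton of a normal fan, and these intersect non-transversally as soon as two of the $\Gamma_i$ have parallel positive-dimensional faces; since such tuples form a Zariski-dense subset of $\K[A]$ on which the algebraic conditions of Proposition~\ref{prp:face-generic-open} are generically satisfied, the non-transversality locus cannot be confined to a proper algebraic subset by any resultant bookkeeping. Any correct argument must therefore handle the valuation map itself rather than replace the transversality condition by an algebraic one.
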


 Consider an element $f\in\K^A$ corresponding to a polynomial map $\KtK$. Let $F:\R^n\to\R^n$ be the tropical polynomial map of $f$. In this section, we show that if $f$ is face-generic, then the tropicalization of its face resultants in the torus coincides with the tropical non-properness set $\TJF$ of $F$. Theorem~\ref{th:main_corresp} will then follow from Theorems~\ref{thm:algebraic_correspondence} and~\ref{thm:tropical-and-tilted}.

\subsection{Dicritical cells and main result} Since the virtual preimage of a tropical polynomial is a tropical hypersurface, for any $y\in\R^n$, the union 
\[
\cT_{y_1}(F_1)\cup \cdots\cup \cT_{y_n}(F_n)
\] forms a polyhedral cell-decomposition of $\R^n$, denoted $\Xi(y)$, and induces a mixed subdivision $\tau_y$ of $\sum\Delta$, dual to $\Xi(y)$ in the sense of Theorem~\ref{thm:mixed_subd} in~\S\ref{subsub:subdivisions2}. Here, the tuple $\Delta$ is the same with respect to $A$ as in~\S\ref{sub:pol-restricted}. Accordingly, the cell-decomposition $\Xi$ of $\R^n$ induced by $F$ is obtained from $\Xi(y)$ by taking $y_i=-\infty$ ($i=1,\ldots,n$). Figure~\ref{fig:subdivision} on the right represents the cell-decomposition $\Xi(-2,-1)$ of $\R^2$ given by the set $\cT_{-2}(F_1)\cup \cT_{-1}(F_2)$, where $F$ is from Example~\ref{ex:virtual_preimage}.\\

\begin{definition}\label{def:dicritical-cells}
A cell $\xi\in\Xi(y)$ is said to \emph{dicritical} if it contains a dicritical half-line (see~\S\ref{sub:prelim}).
\end{definition}

We deduce the following observation from the definitions and Theorem~\ref{thm:mixed_subd}.\\
\begin{lemma}\label{lem:combined-remarks}
Let $\Gamma\prec\Delta$, and let $\xi\in\Xi(y)$, so that $\delta(\xi_i)\subset\Gamma_i$ ($i=1,\ldots,n$). Then, the following holds
	
	\begin{enumerate}[topsep=0mm,itemsep=0mm,label=(\arabic*),ref=(\arabic*)]
		\item\label{it:tilted-dicritical} $\xi$ is dicritical if and only if $\Gamma$ is dicritical, and
		
		\item\label{it:preim=cell} for each $i=1,\ldots,n$ we have $\dim \delta(\xi_i)>0$ if and only if $\xi\subset \cT_y(F_i)$. 
		
	\end{enumerate}		
\end{lemma}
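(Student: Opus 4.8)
The plan is to prove both statements by unwinding the duality of Proposition~\ref{prop:mixed_subd} applied to the $n$-tuple of tropical hypersurfaces $\cT_{y_1}(F_1),\ldots,\cT_{y_n}(F_n)$, whose union is the cell-decomposition $\Xi(y)$ with associated mixed subdivision $\tau_y$ of $\Sigma\Delta^0$. Throughout, fix $\xi\in\Xi(y)$ with $\xi=\xi_1\cap\cdots\cap\xi_n$, $\xi_i$ a cell of the decomposition $\Xi_i(y)$ induced by $\cT_{y_i}(F_i)$, and let $\delta=\delta(\xi)=\delta(\xi_1)+\cdots+\delta(\xi_n)$ be the dual polytope, with $\delta(\xi_i)\subset\Gamma_i$ by hypothesis.

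For item~\ref{it:tilted-dicritical}, I would first record the elementary fact that for a half-line $H$ spanned by a vector $\alpha$, being dicritical (no translate lies in $(\R_{\le 0})^n$) is equivalent to $\alpha$ having a positive coordinate, exactly as in the paper's own remark after the definition of dicritical half-line. Next, by Proposition~\ref{prop:mixed_subd}\ref{it:unboundedness}, the polytope $\delta$ lies on a proper face of $\Sigma\Delta^0$ precisely when $\xi$ is unbounded and contains a half-line spanned by the outward normal vector to that face; since the $\Gamma_i$ are faces and $\Sigma\Gamma$ is a face of $\Sigma\Delta^0$ (a tuple-face), the face of $\Sigma\Delta^0$ containing $\delta$ is $\Sigma\Gamma$, and the relevant half-lines in $\xi$ are spanned by outward normals of $\Sigma\Gamma$. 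Then $\xi$ is dicritical iff one of these outward normal vectors has a positive coordinate, which is exactly the definition (from \S\ref{sub:prelim}) of $\Sigma\Gamma$ being a dicritical face; combined with the standing hypothesis that $\unze\notin A_i$ — hence $\delta(\xi_i)\subset\Gamma_i$ with $\Gamma_i$ not the vertex $\unze$ whenever $\dim\Gamma_i>0$, and more carefully the fact that no $\Gamma_i$ equals $\{\unze\}$ because $\Delta^0_i\setminus\Delta_i=\{\unze\}$ forces any face meeting the $\cT_{y_i}$ combinatorics appropriately — this matches Definition~\ref{def:dicritical-tupe} of a dicritical tuple-face. (I would need to be slightly careful here: a subtlety is that $\Gamma$ is the \emph{smallest} tuple-face with $\delta(\xi_i)\subset\Gamma_i$, so I should argue that dicriticality of $\Sigma\Gamma$ is insensitive to replacing $\Gamma$ by a smaller tuple-face carrying the same $\delta$, or simply take $\Gamma_i$ to be the carrier face of $\delta(\xi_i)$ in $\Delta^0_i$.)

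For item~\ref{it:preim=cell}, the key input is Proposition~\ref{prop:mixed_subd}\ref{it:dimens-compl} together with transversality. First, $\xi\subset\cT_y(F)=\cT_{y_1}(F_1)\cap\cdots\cap\cT_{y_n}(F_n)$ means $\dim\xi\le n-1$ and, more precisely, $\xi$ is a cell of the common refinement lying on every $\cT_{y_i}(F_i)$, i.e.\ $\dim\xi_i\le n-1$ for all $i$. Dually, $\dim\xi_i\le n-1$ is equivalent to $\dim\delta(\xi_i)\ge 1$, since $\cT_{y_i}(F_i)$ is a tropical hypersurface in $\R^n$ and $\dim\xi_i+\dim\delta(\xi_i)=n$ by the duality for a single hypersurface (the paragraph on subdivisions and their duals in \S\ref{subsub:subdivisions2}). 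So I must show $\xi\subset\cT_y(F)\iff\dim\xi_i\ge 1$ for all $i$ — wait, rather $\dim\delta(\xi_i)\ge 1$ for all $i\iff\dim\xi_i\le n-1$ for all $i\iff\xi$ lies on all $\cT_{y_i}(F_i)\iff\xi\subset\cT_y(F)$. The only non-formal point is the middle equivalence in the direction ($\xi$ on every $\cT_{y_i}(F_i)$) $\Rightarrow$ ($\xi\subset\cT_y(F)$): a priori the intersection could be smaller than each piece, but since $\xi$ is by construction a single cell of $\Xi(y)$ written uniquely as $\xi_1\cap\cdots\cap\xi_n$ with each $\xi_i$ a cell of $\Xi_i(y)$, and each $\cT_{y_i}(F_i)$ is the union of its $\le(n-1)$-dimensional cells, we get $\xi\subset\xi_i\subset\cT_{y_i}(F_i)$ as soon as $\dim\xi_i<n$, for every $i$; intersecting gives $\xi\subset\cT_y(F)$. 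Conversely if some $\dim\xi_i=n$ then $\xi\subset\xi_i$ is an open cell of the complement of $\cT_{y_i}(F_i)$, so $\xi\not\subset\cT_{y_i}(F_i)\supset\cT_y(F)$.

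I expect the main obstacle to be bookkeeping rather than depth: precisely pinning down which tuple-face $\Gamma$ is attached to $\xi$ (carrier faces of the $\delta(\xi_i)$ in $\Delta^0_i$), checking that $\Sigma\Gamma$ is then genuinely a face of $\Sigma\Delta^0$ so that Proposition~\ref{prop:mixed_subd}\ref{it:unboundedness} applies with the correct outward normal cone, and handling the role of the standing assumption $\unze\notin A_i$ to guarantee no $\Gamma_i=\{\unze\}$ in the dicritical case (Definition~\ref{def:dicritical-tupe}). Once the dictionary ``$\dim\delta(\xi_i)\ge 1\leftrightarrow\dim\xi_i\le n-1\leftrightarrow\xi$ on $\cT_{y_i}(F_i)$'' and ``$\xi$ unbounded in the $\Sigma\Gamma$-normal direction $\leftrightarrow\ \Sigma\Gamma$ dicritical'' is set up cleanly, both equivalences are immediate from Proposition~\ref{prop:mixed_subd}.
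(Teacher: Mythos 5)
Your proof is correct and follows exactly the route the paper indicates: the paper gives no written argument beyond the remark that the lemma ``is deduced from the definitions and Proposition~\ref{prop:mixed_subd}'', and your unwinding of items~\ref{it:dimens-compl} and~\ref{it:unboundedness} of that proposition (dimension complementarity of $\xi_i$ and $\delta(\xi_i)$ for item~(2), unbounded cells versus outward normals of $\Sigma\Gamma$ for item~(1)) is precisely the intended argument. The bookkeeping subtlety you flag --- that the hypothesis $\delta(\xi_i)\subset\Gamma_i$ does not pin down $\Gamma$ uniquely, so one should take $\Gamma_i$ to be the carrier face of $\delta(\xi_i)$ --- is genuine, but it is glossed over by the paper as well and does not invalidate your approach.
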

Similarly to Notation~\ref{not:face-restrictions} of~\S\ref{sec:faces}, for any $\Gamma\prec\Delta$, and any $i\in [n]$, we define the function $F_{i\Gamma}:\R^n\to\R$, 
\[
x\mapsto\max_{a\in\Gamma_i}(\langle x, a\rangle + \gamma_a).
\] For any $I\subset [n]$, we define the \emph{restriction of $F$ onto $I\Gamma$} to be the tropical polynomial map
\[
F_{I\Gamma}:=(F_{i\Gamma})_{i\in I}:\R^n\to\R^{\#I}.
\]

\begin{example}\label{ex:subdivision_2}
Consider the polytopes from Figure~\ref{fig:subdivision}, and  $F$ from Example~\ref{ex:virtual_preimage}. Let $\Gamma:=(\conv S_1,\conv S_2)$, where $S_1=\left(\begin{smallmatrix}1&2&3\\0&1&1\end{smallmatrix}\right)$, and $ S_2=\left(\begin{smallmatrix}0&1&2\\0&1&2\end{smallmatrix}\right)$. Then $F_{1\Gamma} = F_1$ and $F_{2\Gamma}=\max (a+b,~2a+2b)$.   
\end{example}

Recall that $\TJF$ is the set of points in $\R^n$ for which $\cT_y(F)$ has a dicritical half-line in $\R^n$, and that $\FDinf$ denotes the set of all dicritical faces $\Gamma\prec\Delta$.\\
\begin{figure}

\centering
\tikzset{every picture/.style={line width=0.75pt}} 

\begin{tikzpicture}[x=0.5pt,y=0.5pt,yscale=-1,xscale=1]

\draw [color={rgb, 255:red, 74; green, 144; blue, 226 }  ,draw opacity=0.35 ][line width=1.5]    (326.01,227.33) -- (298.56,199.67) ;
\draw [color={rgb, 255:red, 0; green, 0; blue, 0 }  ,draw opacity=0.35 ][line width=1.5]    (426.01,277.33) -- (376.01,227.33) ;
\draw  [color={rgb, 255:red, 0; green, 0; blue, 0 }  ,draw opacity=1 ][fill={rgb, 255:red, 0; green, 0; blue, 0 }  ,fill opacity=1 ] (326.01,226.17) .. controls (326.64,226.17) and (327.16,226.69) .. (327.16,227.33) .. controls (327.16,227.97) and (326.64,228.48) .. (326.01,228.48) .. controls (325.37,228.48) and (324.85,227.97) .. (324.85,227.33) .. controls (324.85,226.69) and (325.37,226.17) .. (326.01,226.17) -- cycle ;
\draw [color={rgb, 255:red, 0; green, 0; blue, 0 }  ,draw opacity=0.35 ][line width=1.5]    (376.01,227.33) -- (376.01,177.33) ;
\draw  [color={rgb, 255:red, 0; green, 0; blue, 0 }  ,draw opacity=0.35 ][fill={rgb, 255:red, 0; green, 0; blue, 0 }  ,fill opacity=0.35 ] (376.01,226.17) .. controls (376.64,226.17) and (377.16,226.69) .. (377.16,227.33) .. controls (377.16,227.97) and (376.64,228.48) .. (376.01,228.48) .. controls (375.37,228.48) and (374.85,227.97) .. (374.85,227.33) .. controls (374.85,226.69) and (375.37,226.17) .. (376.01,226.17) -- cycle ;
\draw [color={rgb, 255:red, 74; green, 144; blue, 226 }  ,draw opacity=0.35 ][line width=1.5]    (298.56,199.67) -- (298.56,279.67) ;
\draw [color={rgb, 255:red, 74; green, 144; blue, 226 }  ,draw opacity=0.35 ][line width=1.5]    (272.37,147.37) -- (298.56,199.67) ;
\draw [color={rgb, 255:red, 74; green, 144; blue, 226 }  ,draw opacity=0.35 ][line width=1.5]    (249.1,112.65) -- (272.37,147.37) ;
\draw [color={rgb, 255:red, 74; green, 144; blue, 226 }  ,draw opacity=1 ][line width=1.5]    (402.83,277.83) -- (228.38,103.38) ;
\draw  [color={rgb, 255:red, 74; green, 144; blue, 226 }  ,draw opacity=0.35 ][fill={rgb, 255:red, 74; green, 144; blue, 226 }  ,fill opacity=0.35 ] (272.37,146.21) .. controls (273.01,146.21) and (273.53,146.73) .. (273.53,147.37) .. controls (273.53,148.01) and (273.01,148.53) .. (272.37,148.53) .. controls (271.73,148.53) and (271.21,148.01) .. (271.21,147.37) .. controls (271.21,146.73) and (271.73,146.21) .. (272.37,146.21) -- cycle ;
\draw  [color={rgb, 255:red, 74; green, 144; blue, 226 }  ,draw opacity=0.35 ][fill={rgb, 255:red, 74; green, 144; blue, 226 }  ,fill opacity=0.35 ] (298.56,198.51) .. controls (299.2,198.51) and (299.71,199.03) .. (299.71,199.67) .. controls (299.71,200.31) and (299.2,200.82) .. (298.56,200.82) .. controls (297.92,200.82) and (297.4,200.31) .. (297.4,199.67) .. controls (297.4,199.03) and (297.92,198.51) .. (298.56,198.51) -- cycle ;
\draw [color={rgb, 255:red, 0; green, 0; blue, 0 }  ,draw opacity=0.35 ][line width=1.5]    (326.01,227.33) -- (226.01,177.33) ;
\draw [color={rgb, 255:red, 0; green, 0; blue, 0 }  ,draw opacity=0.35 ][line width=1.5]    (376.01,227.33) -- (326.01,227.33) ;
\draw [color={rgb, 255:red, 245; green, 166; blue, 35 }  ,draw opacity=1 ][line width=2.25]    (376.01,277.33) -- (201.55,102.88) ;
\draw  [color={rgb, 255:red, 245; green, 166; blue, 35 }  ,draw opacity=0.35 ][fill={rgb, 255:red, 245; green, 166; blue, 35 }  ,fill opacity=0.35 ] (352.43,225.05) .. controls (353.82,225.05) and (354.94,226.18) .. (354.94,227.57) .. controls (354.94,228.96) and (353.82,230.09) .. (352.43,230.09) .. controls (351.04,230.09) and (349.91,228.96) .. (349.91,227.57) .. controls (349.91,226.18) and (351.04,225.05) .. (352.43,225.05) -- cycle ;
\draw  [color={rgb, 255:red, 245; green, 166; blue, 35 }  ,draw opacity=0.35 ][fill={rgb, 255:red, 245; green, 166; blue, 35 }  ,fill opacity=0.35 ] (298.87,211.23) .. controls (300.26,211.23) and (301.39,212.36) .. (301.39,213.75) .. controls (301.39,215.14) and (300.26,216.27) .. (298.87,216.27) .. controls (297.48,216.27) and (296.35,215.14) .. (296.35,213.75) .. controls (296.35,212.36) and (297.48,211.23) .. (298.87,211.23) -- cycle ;
\draw  [color={rgb, 255:red, 245; green, 166; blue, 35 }  ,draw opacity=0.35 ][fill={rgb, 255:red, 245; green, 166; blue, 35 }  ,fill opacity=0.35 ] (326.01,225.6) .. controls (326.96,225.6) and (327.73,226.38) .. (327.73,227.33) .. controls (327.73,228.28) and (326.96,229.06) .. (326.01,229.06) .. controls (325.05,229.06) and (324.28,228.28) .. (324.28,227.33) .. controls (324.28,226.38) and (325.05,225.6) .. (326.01,225.6) -- cycle ;
\draw [color={rgb, 255:red, 0; green, 0; blue, 0 }  ,draw opacity=1 ][line width=1.5]    (426.01,277.33) -- (251.55,102.88) ;

\end{tikzpicture}

\caption{The cell-decompositions $\Xi(-2,-1)$ (in transparent colors) and $\Xi_{\Gamma}(-2,-1)$ (in opaque colors) corresponding to Example~\ref{ex:subdivision_2}}\label{fig:subdivision_2}
\end{figure}

\begin{theorem}\label{thm:tropical-and-tilted}
Let $A:=(A_1,\ldots,A_n)$ be a collection of finite subsets in $\N^n\setminus\{\unze\}$, and let $\Delta:=(\Delta_1,\ldots,\Delta_n)$ denote the tuple of polytopes in $\R^n$ defined as $\Delta_i := \conv(\{\unze\}\cup A_i)$ ($i=1,\ldots,n$). Let $f\in \K^A$ representing a face-generic polynomial map $(f_1,\ldots,f_n):\KtK$, and let $F$ denote its tropical polynomial map $(f_1^{\trop},\ldots,f_n^{\trop}):\RtR$. Then, it holds that 
\[
\TJF = \bigcup_{\Gamma\in\FDinf}\Val(\crsg(f)\cap\bT).
\]
\end{theorem}

\begin{proof}
The inclusion ``$\supset$'' does not require face-genericity. Hence, we prove it first. Let $\Gamma$ be a dicritical face of $\Delta$ for which $\crsg(f)$ is not empty. Let $\bm{\circleddash}$ be the subset of indexes $i\in[n]$ for which $\Gamma_i$ contains the origin $\unze$. Let $w$ be a point in $\cR_\Gamma(f)\cap\bT$ and let $y$ be its valuation in $\R^n$. We use $\Sigma_\Gamma(y)$ to denote the cell-decomposition of $\R^n$, induced by the collection of tropical hypersurfaces $S_1,\ldots,S_n$, where 
\[
S_i := \begin{cases}
\cT_{y_i}(F_{i\Gamma}), & \text{if }  i\in\bmcd \\[4pt]
\cT(F_{i\Gamma}),  & \text{otherwise}. \\
\end{cases}
\]
We first show that  $S:=S_1\cap\cdots\cap S_n$ is non-empty. We have $\Trop(\pi_z(X_\Gamma))\subset S$, where $\pi_z$ is the projection $(z,w)\mapsto w$, and $X_\Gamma:=\VKb((f-w)_\Gamma)$ (see Definition~\ref{def:r-star-gamma}). Then, the non-emptyness of $S$ follows from the non-emptyness of $\crsg$.

 The definition of $S$ implies that there exists a cell $\sigma\in\Sigma_\Gamma(y)$ satisfying Lemma~\ref{lem:combined-remarks}~\ref{it:preim=cell}. Since $\Gamma$ is a face of $\Delta$, the cell-decomposition $\Xi(y)$ contains a cell $\xi(y)\subset\sigma$ satisfying $\delta(\xi_i(y))=\delta(\sigma_i)$ ($i=1,\ldots,n$) (e.g. if $\Gamma$ and $F$ are as in Example~\ref{ex:subdivision_2}, then $\Sigma_{\Gamma}(-2,-1)$ is represented in Figure~\ref{fig:subdivision_2}). Then, Lemma~\ref{lem:combined-remarks}~\ref{it:tilted-dicritical} shows that $\xi(y)$ is dicritical, and thus $y\in\TJF$.

Now, we prove the other direction ``$\subset$''. Assume that $\cT_y(F)$ has a dicritical half-line $H\subset\R^n$ for some $y\in\R^n$. Then, Lemma~\ref{lem:combined-remarks}~\ref{it:preim=cell} shows that there exists a cell $\xi\in\Xi(y)$, satisfying $\dim\delta(\xi_i)>0$ ($i=1,\ldots,n$), and $H\subset \xi$. 
Theorem~\ref{thm:mixed_subd}~\ref{it:unboundedness} shows that $\delta(\xi_i)\subset\Gamma_i$ ($i=1,\ldots,n$) for some $\Gamma\prec\Delta$, and Lemma~\ref{lem:combined-remarks}~\ref{it:tilted-dicritical} shows that $\Gamma$ is dicritical. Then, to show that $y\in \Val(\crsg(f)\cap\bT)$, we will construct a point $(z^*,w^*)\in\bT\times \bT$ satisfying $(f(z^*) - w^*)_\Gamma = \unze$, $\Val (z^*) \in H$, and $\Val(w^*) = y$.

The property $\dim\delta(\xi_i)>0$ ($i=1,\ldots,n$) above implies that for any $x\in H$, and any $i\in [n]$, the maximum
\begin{equation}\label{eq:maximum_virtual}
\max_{a\in \Gamma_i\cap A_i}(\langle x, a\rangle + \val(c_a(i)),~y_i)
\end{equation} is reached twice. 

We retain from the beginning of the proof the notation of $\bmcd$ corresponding to the face $\Gamma$. Then, 
we have $(f_i -w_i)_\Gamma = f_{i\Gamma} - w_i$ if $i\in\bmcd$, and $(f_i -w_i)_\Gamma = f_{i\Gamma}$ otherwise. 
Consequently, it holds that 
\begin{equation}\label{eq:cases:omega}
\VT((f_i - w_i)_\Gamma)= \begin{cases}
\VT(f_{i\Gamma} - w_i) & \text{if }  i\in\bmcd  \\[4pt]
\VT(f_{i\Gamma})  & \text{otherwise}. \\
\end{cases}
\end{equation} Let $\bm{\top}\subset [n]$ be the largest subset of indexes satisfying $i\in \bm{\top}$ $\Longrightarrow$ for any $x\in H$, the maximum in~\eqref{eq:maximum_virtual} is not reached at $y_i$. We have that $\bm{\top}$ is well-defined, and contains $[n]\setminus\bmcd $.  Furthermore, the set $\cap_{i\in \bm{\top}}\VT(f_{i\Gamma})$ contains $\xi$ thanks to~\eqref{eq:maximum_virtual}, and is a transversal intersection thanks to Definition~\ref{def:face-generic}~\ref{it:vgi_smooth}. Then, Theorem~\ref{thm:Kap-general} shows that for any $x\in H \subset \cap_{i\in \bm{\top}}\VT(f_{i\Gamma})$, the system
\begin{equation}\label{eq:sys:theta}
f_{i\Gamma} = 0,~i\in \bm{\top}
\end{equation} has a solution $z^*\in\TKn$ satisfying $\Val(z^*)=x$. In fact, since~\eqref{eq:sys:theta} is quasi-homogeneous, there are infinitely-many such solutions. As the map $f$ is face-generic, we may choose $z^*$ so that $f_{i\Gamma}(z^*)\neq 0$ if $i\in [n]\setminus\bm{\top}$.

Finally, we construct $w^*$ as follows. For each $i\in\bm{\top}$, we choose any value for $w_i^*\in \K\setminus\{0\}$ that satisfies $\val(w^*_i) = y_i$. For the remaining indexes $i\in [n]\setminus\bm{\top}$, we set $w_i^*:= f_{i\Gamma}(z^*)$. Since $y_i$ reaches the maximum in~\eqref{eq:maximum_virtual}, and $\Val(z^*)=x$, we get $\val(w^*_i) = y_i$. Ultimately, we obtain $w^*\in\crsg(f)\cap\bT$, and $y=\Val(w^*)$, which finishes the proof.
\end{proof}

\section{Proof of Theorem~\ref{th:main_computation}}\label{sec:computing}
We keep the notations from~\S\ref{sec:correspondence} regarding $A$, $\Delta$, $f\in\K^A$, its tropical map $F$, and the cell-decomposition $\Xi$ of $\R^n$ induced by $F$. Let $\Dicr(F,\xi)$ denote the set of all $y\in\R^n$ for which $\xi$ contains a dicritical half-line from $\cT_y(F)$. We call this a \emph{dicritical image of $\xi$}. Since $\Xi$ is formed by finitely-many disjoint relatively open polyhedra, for any half-line $H\subset\R^n$, there exists a unique cell $\xi\in\Xi$ such that $H\cap\xi$ is a half-line. Then, we obtain the following equation from the definitions:
\begin{equation}\label{eq:trop-non-prop_Dicr}
\TJF = \bigcup_{\xi\in\Xi} \Dicr(F,\xi).
\end{equation}
Recall that we use $\mathcal{V}_\xi(F)$ to denote the set of all $y\in\R^n$ satisfying $\cT_y(f)\cap\xi\neq \emptyset$. In what follows, we will prove Theorem~\ref{th:main_computation} using~\eqref{eq:trop-non-prop_Dicr}; we show that $\Dicr(F,\xi)$ is empty for some types of cells $\xi\in\Xi$ (Lemma~\ref{lem:if-not-contributing}), and for the remaining types, we show that $\Dicr(F,\xi)=\mathcal{V}_\xi(F)$ and that $\overline{\Dicr(F,\xi)}$ is a polyhedron (Lemma~\ref{lem:if-contributing}).

\subsection{Contributing cells for tropical non-properness} Let $\xi\in\Xi$ be a dicritical cell. Then, Items~\ref{it:orthogonality} and~\ref{it:unboundedness} of  Theorem~\ref{thm:mixed_subd} show that its dual cell $\delta(\xi)$ belongs to $\sum\Gamma$ for some dicritical face $\Gamma\prec\Delta$. From the proof of Theorem~\ref{thm:tropical-and-tilted}, we retain the notation $\bmcd$ for the subset of indexes $i\in[n]$ for which $\Gamma_i$ contains $\unze$. Namely, the set $\bmcd$ is empty if $\Gamma$ is not semi-origin, and it is equal to $[n]$ if it is origin. We also define the subset 
\[
\iup:=\{i\in[n]~|~\dim\delta(\xi_i)>0\}.
\] 
\begin{definition}\label{def:suitable}
We say that $\xi$ is \emph{contributing} if $\xi$ is dicritical and either $\bmcd = [n]$ or $[n]\setminus \bmcd\subset\iup$. Namely, for each $i\in [n]$, each polytope $\delta(\xi_i)$ of a contributing cell $\xi$ is the origin $\{\unze\}$ if it has dimension zero.
\end{definition} 

\begin{lemma}\label{lem:if-not-contributing}
If $\xi\in\Xi$ is not a contributing cell. Then, we have $\Dicr(F,\xi)$ is empty.
\end{lemma} 

\begin{proof}
Clearly, the set $\Dicr(F,\xi)$ is empty if $\xi$ is not dicritical (recall Definition~\ref{def:dicritical-cells}). Assume in what follows that $\xi$ is dicritical and that $\xi$ contains a dicritical half-line $H$ from $\cT_{y}(F)$  for some $y\in\R^n$. 

The virtual preimage $\cT_{y}(F)$ induces a cell decomposition $\Xi(y)$ of $\R^n$, where $H\subset \xi(y)$ for some cell $\xi(y)\in\Xi(y)$. Let $\delta(\xi(y)) = \delta(\xi_1(y)) + \cdots + \delta(\xi_n(y))$ denote the dual polytope of $\xi(y)$ in the subdivision $\tau_y$ of $\Delta$ induced by $\cT_y(F)$. Since $H\subset\xi$, Theorem~\ref{thm:mixed_subd}  shows that $\delta(\xi(y))\subset\sum\Gamma$ for some dicritical $\Gamma\prec\Delta$. Then, for each $i\in[n]$ and each $x\in H$, the maximum 
\begin{equation}\label{eq:tropical-withy}
\max_{a\in \delta(\xi_i(y))\cap A_i}(\langle a,~x\rangle + \gamma_{i,a},~y_i)
\end{equation} is reached twice. If for some $i_0\in [n]$, we have $\dim \delta(\xi_{i_0}) =0$, the corresponding expression~\eqref{eq:tropical-withy} equals 
\[
\langle a,~x\rangle + \gamma_{i_0,a}=y_{i_0},
\]  where $a\in\delta(\xi_{i_0})$. Then, from $\delta(\xi_{i_0}(y))\subset \Gamma_{i_0}$, we get $\unze\in\Gamma_{i_0}$, and thus $i_0\in\bmcd$. This shows that $[n]\setminus\bmcd\subset\iup$, and thus $\xi$ is a contributing cell.
\end{proof}

\begin{lemma}\label{lem:if-contributing}
Let $\xi\in\Xi$ be a contributing cell. Then, the set $\mathcal{V}_{\xi}(F)$ satisfies \begin{equation}\label{eq:if-contributing}
\mathcal{V}_{\xi}(F) = \Dicr(F,\xi),
\end{equation} and its closure, $\overline{\mathcal{V}_{\xi}(F)}$, is a polyhedron.
\end{lemma} 

\begin{proof}
For $i=1,\ldots,n$, the tropical polynomial $\hat{F}_i$, given by 
\[
\max_{a\in A_i}(\langle x,~a\rangle+\gamma_{i,a},~y_i)
\] defines a tropical hypersurface $\V(\hat{F}_i):=\VT (f_i - w_i)$ in the space with coordinates $(x,y)\in\R^n\times\R^n$. Let $\mathcal{G}(\hat{F})$ denote the \emph{virtual graph of $F$}, given by the intersection
\[
\mathcal{G}(\hat{F}):=\V(\hat{F}_1)\cap\cdots\cap\V(\hat{F}_n),  
\]and let $\pi_1$ and $\pi_2$ denote the two projections $(x,y)\mapsto x$ and $(x,y)\mapsto y$ respectively. Then, for any $\xi\in\Xi$, we get 
\[
\mathcal{V}_\xi(F) = \pi_2\big(~\pi^{-1}_1(\xi)\cap \mathcal{G}(\hat{F})~\big).
\] Note that $\pi^{-1}_1(\xi)\cap \mathcal{G}(\hat{F})$ coincides with the set of points $(x,y)\in\R^n\times\R^n$ satisfying
\begin{equation}\label{eq:cases-li}
\begin{cases}
L^i_a(x) = L^i_{a'}(x)\geq\max_{c\in A_i} (y_i,~L^i_c(x)) & \text{for all }  i\in\iup \\[4pt]
L^i_b(x) = y_i>\max_{c\in A_i} (L^i_c(x)) & \text{for all }i\not\in\iup, 
\end{cases}
\end{equation} where $a,a',b\in\delta(\xi_i)\cap A_i$, and $L_\alpha^i(x):=\langle\alpha,x\rangle +\gamma_{i,\alpha}$ for any $\alpha\in A_i$. This shows that the closure of $\pi^{-1}_1(\xi)\cap G(\hat{F})$ is a polyhedron, and thus so is $\overline{\mathcal{V}_\xi(F)}$ a polyhedron.

To prove Equality~\eqref{eq:if-contributing}, pick any $y\in\mathcal{V}_\xi(F)$. Then, the set $\xi\cap\cT_y(F)$ is expressed by the equations in~\eqref{eq:cases-li} which, thanks to Theorem~\ref{thm:mixed_subd}~\ref{it:unboundedness}, forms a cone orthogonal to $\sum\Gamma$. Then, by Lemma~\ref{lem:combined-remarks}~\ref{it:tilted-dicritical}, we get that $\xi\cap\cT_y(F)$ contains a dicritical half-line. Therefore, we have $y\in\Dicr(F,\xi)$.\end{proof}

\begin{proof}[Proof of Theorem~\ref{th:main_computation}] It follows from Equation~\ref{eq:trop-non-prop_Dicr}, Lemmas~\ref{lem:if-not-contributing} and~\ref{lem:if-contributing}.
\end{proof}

\section*{Acknowledgments}
The author would like to thank the anonymous referee for their thorough remarks and suggestions on an earlier version of this paper, which led to significant improvements in presentation and correctness.

\subsection*{Contact}\ \\
Boulos El Hilany
\\
Institut f\"ur Analysis und Algebra, TU Braunschweig
\\
{b.el-hilany@tu-braunschweig.de}
\\
\href{https://boulos-elhilany.com}{boulos-elhilany.com}

\bibliographystyle{plain}

\def\cprime{$'$}

\end{document}